\pgfplotsset{compat=newest}
\definecolor{mycolor1}{rgb}{0.46600,0.67400,0.18800}%
\definecolor{mycolor2}{rgb}{0.30100,0.74500,0.93300}%
\DeclarePairedDelimiter\floor{\lfloor}{\rfloor}
\newcommand{\R}{\mathbb{R}}
\newcommand{\N}{\mathbb{N}}
\newcommand{\eps}{\varepsilon}
\newcommand{\fhi}{\varphi}
\newcommand{\vertiii}[1]{{\left\vert\kern-0.25ex\left\vert\kern-0.25ex\left\vert #1
    \right\vert\kern-0.25ex\right\vert\kern-0.25ex\right\vert}}
\newcommand{\supp}{\mathrm{supp}}
\def\calA{\mathcal{A}}
\def\calC{\mathcal{C}}
\def\calK{\mathcal{K}}
\def\calO{\mathcal{O}}
\def\calN{\mathcal{N}}
\def\calT{\mathcal{T}}
\def\calF{\mathcal{F}}
\def\calP{\mathcal{P}}
\def\Xint#1{\mathchoice
   {\XXint\displaystyle\textstyle{#1}}%
   {\XXint\textstyle\scriptstyle{#1}}%
   {\XXint\scriptstyle\scriptscriptstyle{#1}}%
   {\XXint\scriptscriptstyle\scriptscriptstyle{#1}}%
   \!\int}
\def\XXint#1#2#3{{\setbox0=\hbox{$#1{#2#3}{\int}$}
     \vcenter{\hbox{$#2#3$}}\kern-.5\wd0}}
\def\meanint{\Xint-}
\numberwithin{equation}{section}
\crefname{equation}{}{}
\crefname{assumption}{Assumption}{Assumptions}
\crefname{rmrk}{Remark}{Remarks}
\newtheorem{rmrk}{Remark}[section]    
\newtheorem{thrm}{Theorem}[section]     
\newtheorem{lmm}{Lemma}[section]
\newcommand{\sign}{\operatorname{sign}}
\begin{document}

\title[Computational multiscale methods for nondivergence-form PDEs]{Computational multiscale methods for nondivergence-form elliptic partial differential equations}

\author[P. Freese]{Philip Freese}
\address[Philip Freese]{Technische Universit\"{a}t Hamburg, Institut f\"{u}r Mathematik, Am Schwarzenberg-Campus 3, 21073 Hamburg, Germany.}
\email{philip.freese@tuhh.de}

\author[D. Gallistl]{Dietmar Gallistl}
\address[Dietmar Gallistl]{Friedrich-Schiller-Universit\"{a}t Jena, Institut f\"{u}r Mathematik, Ernst-Abbe-Platz 2, 07743 Jena, Germany.}
\email{dietmar.gallistl@uni-jena.de}

\author[D. Peterseim]{Daniel Peterseim}
\address[Daniel Peterseim]{Universit\"{a}t Augsburg, Institut f\"{u}r Mathematik \& Centre for Advanced Analytics and Predictive Sciences (CAAPS), Universit\"atsstr.~12a, 86159 Augsburg, Germany.}
\email{daniel.peterseim@uni-a.de}

\author[T. Sprekeler]{Timo Sprekeler}
\address[Timo Sprekeler]{National University of Singapore, Department of Mathematics, 10 Lower Kent Ridge Road, Singapore 119076, Singapore.}
\email{timo.sprekeler@nus.edu.sg}

\subjclass[2010]{35J15, 65N12, 65N30}
\keywords{nondivergence-form elliptic PDE; localized orthogonal decomposition; numerical homogenization; finite element methods}
\date{\today}

\begin{abstract}
This paper proposes novel computational multiscale methods for linear second-order elliptic partial differential equations in nondivergence-form with heterogeneous coefficients satisfying a Cordes condition. The construction follows the methodology of localized orthogonal decomposition (LOD) and provides operator-adapted coarse spaces by solving localized cell problems on a fine scale in the spirit of numerical homogenization. The degrees of freedom of the coarse spaces are related to nonconforming and mixed finite element methods for homogeneous problems. The rigorous error analysis of one exemplary approach shows that the favorable properties of the LOD methodology known from divergence-form PDEs, i.e., its applicability and accuracy beyond scale separation and periodicity, remain valid for problems in nondivergence-form. 
\end{abstract}

\maketitle

\section{Introduction}

In this work, we consider linear second-order elliptic partial differential equations of the form
\begin{align}\label{intro1}
A:D^2 u + b\cdot \nabla u - c\, u \coloneqq  \sum_{i,j = 1}^n a_{ij}\, \partial_{ij}^2 u + \sum_{k = 1}^n b_k\, \partial_k u - c\, u = f\quad\text{in }\Omega,
\end{align}
posed on a bounded convex polyhedral domain $\Omega\subset \R^n$, $n\geq 2$, with a right-hand side $f\in L^2(\Omega)$, subject to the homogeneous Dirichlet boundary condition
\begin{align}\label{intro2}
u = 0\quad\text{on }\partial\Omega,
\end{align}
where $A = (a_{ij})_{1\leq i,j\leq n}\in L^{\infty}(\Omega;\R^{n\times n}_{\mathrm{sym}})$, $b = (b_k)_{1\leq k\leq n}\in L^{\infty}(\Omega;\R^n)$, and $c\in L^{\infty}(\Omega)$ are heterogeneous coefficients such that $A$ is uniformly elliptic, $c\geq 0$ almost everywhere in $\Omega$, and the triple $(A,b,c)$ satisfies a (generalized) Cordes condition. Our main objective in this paper is to propose and rigorously analyze a novel finite element scheme for the accurate numerical approximation of the solution to the multiscale problem \cref{intro1}--\cref{intro2}, a task we will refer to as numerical homogenization, by following the methodology of localized orthogonal decomposition (LOD) \cite{MaP14,MalP20}. It is worth mentioning that we are working in a framework beyond periodicity and separation of scales.   

The motivation for investigating \cref{intro1}--\cref{intro2} stems from engineering, physics, and mathematical areas such as stochastic analysis. Notably, such equations arise in the linearization of Hamilton--Jacobi--Bellman (HJB) equations from stochastic control theory. A distinguishing feature of nondivergence-form problems such as \cref{intro1}--\cref{intro2} is the absence of a natural variational formulation. However, due to the Cordes condition, there exists a unique strong solution to \cref{intro1}--\cref{intro2} which can be equivalently characterized as the unique solution to the Lax--Milgram-type problem
\begin{align}\label{intro3}
\text{seek }u\in V \coloneqq  H^2(\Omega)\cap H^1_0(\Omega)\text{ such that }a(u,v) = \langle F,v\rangle\text{ for all }v\in V
\end{align}
with some suitably defined $F\in V^{\ast}$ and bounded coercive bilinear form $a:V\times V\rightarrow \R$. 

In the presence of coefficients that vary on a fine scale, e.g., when $A(x) = \tilde{A}(\frac{x}{\eps})$ with some $(0,1)^n$-periodic $\tilde{A}\in L^{\infty}(\R^n;\R^{n\times n}_{\mathrm{sym}})$ and $\eps > 0$ small, classical finite element methods are being outperformed by multiscale finite element methods such as developed in this paper. For periodic coefficients, periodic homogenization has been proposed for linear elliptic equations in nondivergence-form, cf. \cite{AL89,BLP11,GST22,GTY20,JKO94,KL16,Spr23,ST21}. Numerical homogenization of such problems has not been studied extensively so far. A finite element numerical homogenization scheme for the periodic setting has been proposed and analyzed in \cite{CSS20}, which is based on an approximation of the solution to the homogenized problem via a finite element approximation of an invariant measure (see also \cite{Spr23}). Further, there has been some previous study on finite difference approaches for such problems in the periodic setting; see \cite{AK18,FO09}. Concerning fully-nonlinear HJB and Isaacs equations, finite element approaches for the numerical homogenization in the periodic setting have been suggested in \cite{GSS21,KS22} and some finite difference schemes have been studied in \cite{CM09,FO18,FOO18}. 

The case of arbitrarily rough coefficients beyond periodicity and scale separation has not yet been addressed. For divergence-form PDEs, several numerical homogenization methods have been developed in the last decade, which are based on the construction of operator-adapted basis functions and are applicable without such structural assumptions. We highlight the LOD \cite{MaP14,HeP13,KPY18,Mai20}, the Generalized Finite Element Method \cite{BaL11,EGH13,Ma22}, Rough Polyharmonic Splines and Gamblets \cite{OZB14,Owh17} as well as the recently proposed Super-Localized Orthogonal Decomposition \cite{HP21pre,FHP21,BFP22}. 

The aim of this paper is to transfer such modern numerical homogenization methods to the case of nondivergence-form problems, and to provide a proof of concept that this framework also applies to this class of equations. The only existing link between numerical homogenization and nondivergence-form problems is the metric-based upscaling proposed in \cite{OZ07} which exploits nondivergence-form problems for a problem-dependent change of metric as part of the numerical homogenization of divergence-form problems. Our construction of a practical finite element method for the nondivergence-form problem \cref{intro1}--\cref{intro2} in the presence of multiscale data follows the abstract LOD framework for numerical homogenization methods for divergence-form problems presented in \cite{AHP21}. It is based on the problem \cref{intro3} as starting point, $a$-orthogonal decompositions of the solution space $V$ and the test space $V$ into a fine-scale space -- defined as the intersection of the kernels of suitably chosen quantities of interest $q_1,\dots,q_N\in V^{\ast}$ -- and some coarse scale space, and a localization argument. In our exemplary approach, the choice of quantities of interests is inspired by the degrees of freedom of the nonconforming Morley finite element. 

The remainder of this work is organized as follows. In \cref{sec:Problem Setting and Well-Posedness}, we present the problem setting as well as the theoretical foundation including the well-posedness of \cref{intro1}--\cref{intro2} based on a Cordes condition. In \Cref{sec:Numerical homogenization scheme}, we introduce the numerical homogenization scheme for the approximation of the solution to \cref{intro1}--\cref{intro2} based on LOD theory. The proposed numerical homogenization scheme is rigorously analyzed and error bounds are proved. The numerical implementation is based on a $H^2$-conforming Birkhoff--Mansfield element and is introduced in \cref{sec:Fine-scale discretization by conforming Birkhoff--Mansfield elements}. In \cref{sec:Numerical Experiments}, we illustrate the theoretical findings by several numerical experiments and finally, in \cref{sec:Alternative discretization using a mixed non-symmetric FEM}, we discuss an alternative discretization based on mixed finite element theory.

\section{Problem setting and well-posedness}\label{sec:Problem Setting and Well-Posedness}

\subsection{Framework}

For a bounded convex polyhedral domain $\Omega\subset \R^n$ in dimension $n\geq 2$, and a right-hand side $f\in L^2(\Omega)$, we consider the problem
\begin{align}\label{u prob}
\left\{ \begin{aligned}
Lu \coloneqq A:D^2 u + b\cdot \nabla u - c\, u &= f\quad\text{in }\Omega,\\
u &= 0 \quad\text{on }\partial\Omega,
\end{aligned}\right.
\end{align}
where we assume that
\begin{align*}
A\in L^{\infty}(\Omega;\R^{n\times n}_{\mathrm{sym}}),\quad b\in L^{\infty}(\Omega;\R^{n}),\quad c\in L^{\infty}(\Omega)\text{ with }c\geq 0 \text{ a.e. in }\Omega,
\end{align*}
that $A$ is uniformly elliptic, i.e., there exist constants $\zeta_1,\zeta_2 > 0$ such that
\begin{align}\label{unifm ell}
\forall \xi\in \R^n\backslash\{0\}:\quad \zeta_1\leq  \frac{A\xi\cdot \xi}{\lvert \xi\rvert^2} \leq \zeta_2 \quad \text{a.e. in }\Omega,
\end{align}
and that the triple $(A,b,c)$ satisfies the Cordes condition, that is, we make the following assumption:
\begin{itemize}
\item[(C1)] If $\lvert b\rvert = c = 0$ a.e. in $\Omega$, we assume that there exists a constant $\delta\in (0,1)$ such that
\begin{align}\label{CordesC1}
\frac{\lvert A \rvert^2}{(\mathrm{tr}(A))^2}\leq \frac{1}{n-1+\delta}\quad\text{ a.e. in }\Omega.
\end{align}
Further, in this case we set $\gamma\coloneqq \frac{\mathrm{tr}(A)}{\lvert A \rvert^2}$ and $\lambda \coloneqq  0$.

\item[(C2)] Otherwise, we assume that there exist constants $\delta\in (0,1)$ and $\lambda \in (0,\infty)$ such that
\begin{align}\label{CordesC2}
\frac{\lvert A\rvert^2 + \frac{1}{2\lambda}\lvert b\rvert^2 + \frac{1}{\lambda^2}c^2}{\left(\mathrm{tr}(A) + \frac{1}{\lambda}c\right)^2}\leq \frac{1}{n + \delta}\quad\text{ a.e. in }\Omega.
\end{align}
Further, in this case we set $\gamma \coloneqq  \frac{\mathrm{tr}(A) + \frac{1}{\lambda}c}{\lvert A\rvert^2 + \frac{1}{2\lambda}\lvert b\rvert^2 + \frac{1}{\lambda^2}c^2}$.
\end{itemize}
Here, we have used the notation $\lvert M\rvert \coloneqq  \sqrt{M:M}$ to denote the Frobenius norm of $M\in \R^{n\times n}$.

\begin{rmrk}\label{Rk: C1 in 2d}
When $n = 2$, uniform ellipticity \cref{unifm ell} guarantees that the condition \cref{CordesC1} is satisfied for some $\delta\in (0,1)$; see e.g., \cite{SS14}.   
\end{rmrk}

\begin{rmrk}[Properties of $\gamma$]
Note that $\gamma\in L^{\infty}(\Omega)$ and that there exist constants $\gamma_0,\Gamma > 0$ depending only on $n,\zeta_1,\zeta_2,\lambda,\|b\|_{L^{\infty}(\Omega)},\|c\|_{L^{\infty}(\Omega)}$ such that $\gamma_0 \leq \gamma \leq \Gamma$ a.e. in $\Omega$.
\end{rmrk}

\subsection{Well-posedness}

We introduce the Hilbert space $(V,(\cdot,\cdot)_V)$ by setting
\begin{align}\label{V defn}
V\coloneqq H^2(\Omega)\cap H^1_0(\Omega),\qquad (\cdot,\cdot)_V \coloneqq  (\cdot,\cdot)_{H^2(\Omega)},
\end{align}
and we write $\|\cdot\|_V\coloneqq \|\cdot\|_{H^2(\Omega)}$ and $\|\cdot\|_{V,\omega}\coloneqq \|\cdot\|_{H^2(\omega)}$ for any subdomain $\omega\subset \Omega$. Then, introducing the bilinear form
\begin{align*}
a:V\times V \rightarrow \R,\qquad  a(v_1,v_2)\coloneqq  \left( \gamma\, L v_1,\Delta v_2 - \lambda v_2 \right)_{L^2(\Omega)},
\end{align*}
the linear operator
\begin{align*}
\calA:V\rightarrow V^{\ast},\qquad v\mapsto \calA v\coloneqq  a(v,\cdot),
\end{align*}
and the linear functional
\begin{align*}
F:V\rightarrow \R,\qquad  v\mapsto \langle F,v\rangle \coloneqq  \left( \gamma f,\Delta v - \lambda v \right)_{L^2(\Omega)},
\end{align*}
it is well-known that we have existence and uniqueness of a strong solution to \eqref{u prob}; see \cite{SS13,SS14}:
\begin{thrm}[Well-posedness]\label{Thm: Well-posed u problem}
The following assertions hold true.
\begin{itemize}
\item[(i)] A function $u\in V$ is a strong solution to \cref{u prob} if, and only if, 
\begin{align}
	a(u,v) = \langle F,v\rangle\quad\text{for all }v\in V.
	\label{eq:weak}
\end{align}
\item[(ii)] There exists a unique $u\in V$ such that $\calA u = F$ in $V^{\ast}$, i.e., $a(u,v) = \langle F,v\rangle$ for all $v\in V$. 
\end{itemize}
In particular, the problem \cref{u prob} has a unique strong solution $u\in V$.
\end{thrm}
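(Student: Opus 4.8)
The plan is to follow the now-standard reformulation due to Smears and Süli \cite{SS13,SS14}: I would verify that $a$ is a bounded and coercive bilinear form on the Hilbert space $V$, that the functional $F$ is bounded, and that the two notions of solution coincide, so that the Lax--Milgram theorem yields both assertions at once. For the equivalence in (i), one direction is immediate: if $u\in V$ satisfies $Lu = f$ a.e.\ in $\Omega$, then $\gamma Lu = \gamma f$ a.e., and testing this identity in $L^2(\Omega)$ against $\Delta v - \lambda v$ for arbitrary $v\in V$ gives \cref{eq:weak}. For the converse, the key observation is that the map $v\mapsto \Delta v - \lambda v$ sends $V = H^2(\Omega)\cap H^1_0(\Omega)$ \emph{onto} $L^2(\Omega)$, since $\Omega$ is bounded and convex and $\lambda\geq 0$ (for $\lambda = 0$ this is the $H^2$-regularity of the Dirichlet Laplacian on convex domains; for $\lambda>0$ one includes the zeroth-order term). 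Hence \cref{eq:weak} forces $\gamma(Lu-f) = 0$ a.e., and since $\gamma\geq\gamma_0>0$ this gives $Lu = f$ a.e.\ in $\Omega$, i.e.\ $u$ is a strong solution.

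Boundedness of $a$ and $F$ is routine: by Cauchy--Schwarz, the bound $\gamma\leq\Gamma$, and the $L^\infty$-bounds on $A,b,c$, one has $|a(v_1,v_2)|\leq \Gamma\,\|Lv_1\|_{L^2(\Omega)}\,\|\Delta v_2-\lambda v_2\|_{L^2(\Omega)}\lesssim\|v_1\|_V\|v_2\|_V$ and similarly $|\langle F,v\rangle|\lesssim\|f\|_{L^2(\Omega)}\|v\|_V$. The heart of the matter is coercivity, and here the two ingredients are: (a) the pointwise algebraic consequence of the Cordes condition, namely $|\gamma Lv - (\Delta v - \lambda v)|\leq\sqrt{1-\delta}\,\sqrt{|D^2 v|^2 + 2\lambda|\nabla v|^2 + \lambda^2 v^2}$ a.e.\ in $\Omega$ for every $v\in V$ (in case (C1), $\lambda = 0$ and the lower-order terms are absent); and (b) the Miranda--Talenti estimate $\|D^2 v\|_{L^2(\Omega)}\leq\|\Delta v\|_{L^2(\Omega)}$ on convex domains, which together with the integration-by-parts identity $\|\Delta v - \lambda v\|_{L^2(\Omega)}^2 = \|\Delta v\|_{L^2(\Omega)}^2 + 2\lambda\|\nabla v\|_{L^2(\Omega)}^2 + \lambda^2\|v\|_{L^2(\Omega)}^2$ (valid since $v\in H^1_0(\Omega)$) yields $\int_\Omega(|D^2 v|^2 + 2\lambda|\nabla v|^2 + \lambda^2 v^2)\,dx\leq\|\Delta v - \lambda v\|_{L^2(\Omega)}^2$.

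Combining these, I would write $a(v,v) = \|\Delta v - \lambda v\|_{L^2(\Omega)}^2 - \bigl((\Delta v - \lambda v) - \gamma Lv,\,\Delta v - \lambda v\bigr)_{L^2(\Omega)}$ and bound the second term, via Cauchy--Schwarz and (a)--(b), by $\sqrt{1-\delta}\,\|\Delta v - \lambda v\|_{L^2(\Omega)}^2$, so that $a(v,v)\geq(1-\sqrt{1-\delta})\|\Delta v - \lambda v\|_{L^2(\Omega)}^2$. Finally, $\|\Delta v - \lambda v\|_{L^2(\Omega)}$ controls $\|v\|_V = \|v\|_{H^2(\Omega)}$ from below — again by the identity above together with the Poincaré--Friedrichs inequality on the bounded domain $\Omega$ — so $a$ is coercive with a constant depending only on $n,\zeta_1,\zeta_2,\delta,\lambda,\|b\|_{L^\infty(\Omega)},\|c\|_{L^\infty(\Omega)}$, and $\Omega$. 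The Lax--Milgram theorem then provides the unique $u\in V$ with $\calA u = F$, which proves (ii), and by (i) this $u$ is the unique strong solution of \cref{u prob}.

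I expect the main obstacle to be the coercivity argument: one must carefully unpack the two alternative forms (C1) and (C2) of the Cordes condition — renormalizing $L$ by $\gamma$ and comparing with $\Delta-\lambda$ — to obtain the single pointwise inequality (a) uniformly in both the degenerate ($\lambda = 0$) and non-degenerate ($\lambda>0$) cases, and to invoke the Miranda--Talenti inequality, whose validity relies crucially on the convexity of $\Omega$; the surjectivity of $\Delta-\lambda:V\to L^2(\Omega)$ used in (i) rests on the same convexity-based $H^2$-regularity.
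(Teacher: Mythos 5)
Your proposal is correct and follows essentially the same route as the paper: assertion (i) via the bijectivity of $\Delta - \lambda : V \to L^2(\Omega)$ on the convex domain together with $\gamma \geq \gamma_0 > 0$, and assertion (ii) via Lax--Milgram, with coercivity obtained from the Cordes-derived pointwise bound $\lvert \gamma L v - (\Delta v - \lambda v)\rvert \leq \sqrt{1-\delta}\,\sqrt{\lvert D^2 v\rvert^2 + 2\lambda \lvert\nabla v\rvert^2 + \lambda^2 v^2}$ combined with the Miranda--Talenti estimate, exactly as in \cref{Lmm: prop of a F} and the references \cite{SS13,SS14} the paper points to. The only minor discrepancy is cosmetic: you list the coercivity constant as depending on $\zeta_1,\zeta_2,\lambda,\|b\|_{L^\infty},\|c\|_{L^\infty}$ in addition to $n,\delta,\Omega$, whereas the paper correctly observes that once the pointwise Cordes bound is in hand the constant in $a(v,v)\geq(1-\sqrt{1-\delta})\|\Delta v - \lambda v\|^2 \geq \alpha\|v\|_V^2$ depends only on $\mathrm{diam}(\Omega)$, $n$, and $\delta$.
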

Note that assertion (i) of \cref{Thm: Well-posed u problem} follows immediately from the fact that for any $g\in L^2(\Omega)$ there exists a unique $v\in V$ such that $\Delta v - \lambda v = g$, and the positivity of the renormalization function $\gamma$. Assertion (ii) of \cref{Thm: Well-posed u problem} is shown by a standard Lax--Milgram argument using the properties of $a$ and $F$ listed below.
\begin{lmm}[Properties of the maps $a$ and $F$]\label{Lmm: prop of a F}
The following assertions hold true.
\begin{itemize}
\item[(i)] Local boundedness of $a$: There exists a constant $C_a >0$ depending only on $n,\zeta_1,\zeta_2$, $\lambda$,$\|b\|_{L^{\infty}(\Omega)}$,$\|c\|_{L^{\infty}(\Omega)}$ such that for any subdomains $\omega_1,\omega_2\subset \Omega$ we have that
\begin{align*}
\forall v_1,v_2\in V:\quad \supp(v_1)\subset \omega_1,\supp(v_2)\subset \omega_2\quad\Longrightarrow\quad
\lvert a(v_1,v_2)\rvert \leq C_a \|v_1\|_{V,\omega_1\cap \omega_2} \|v_2\|_{V,\omega_1\cap \omega_2}.
\end{align*}
\item[(ii)] Coercivity of $a$: There exists a constant $\alpha > 0$ depending only on $\mathrm{diam}(\Omega),n,\delta$ such that
\begin{align*}
a(v,v)\geq (1-\sqrt{1-\delta})\|\Delta v - \lambda v\|_{L^2(\Omega)}^2 \geq  \alpha \|v\|_V^2\qquad \forall v\in V.
\end{align*}
\item[(iii)] $F\in V^*$: There exists a constant $\mu >0$ depending only on $n,\zeta_1,\zeta_2,\lambda,\|b\|_{L^{\infty}(\Omega)},\|c\|_{L^{\infty}(\Omega)}$ such that
\begin{align*}
\lvert \langle F,v\rangle\rvert \leq \mu \|f\|_{L^2(\Omega)} \|v\|_V\qquad \forall v\in V,
\end{align*}
or equivalently, $\|F\|_{V^*}\leq \mu \|f\|_{L^2(\Omega)}$.
\end{itemize}
\end{lmm}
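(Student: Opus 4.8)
We propose to deduce all three assertions from two classical facts about Cordes coefficients, which may be quoted from \cite{SS13,SS14}: a pointwise \emph{Cordes inequality} comparing the renormalized operator $\gamma L$ with the shifted Laplacian $\Delta - \lambda$, and the \emph{Miranda--Talenti estimate} $\|D^2 v\|_{L^2(\Omega)}\leq\|\Delta v\|_{L^2(\Omega)}$ valid for $v\in V$ on the convex domain $\Omega$. The first step of the plan is to establish, for every $v\in V$, the pointwise bound
\begin{align*}
\lvert \gamma\, Lv - (\Delta v - \lambda v)\rvert \leq \sqrt{1-\delta}\,\bigl(\lvert D^2 v\rvert^2 + 2\lambda\lvert \nabla v\rvert^2 + \lambda^2 v^2\bigr)^{1/2}\qquad\text{a.e. in }\Omega
\end{align*}
(with the convention $\lambda=0$ in case (C1)). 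To see this I would write $\gamma Lv - (\Delta v - \lambda v) = (\gamma A - I):D^2 v + \gamma b\cdot\nabla v + (\lambda - \gamma c)v$ and apply the Cauchy--Schwarz inequality in $\R^{n\times n}\times\R^n\times\R$ with weights $1,2\lambda,\lambda^2$; inserting the definition of $\gamma$, the resulting squared prefactor simplifies to
\begin{align*}
\lvert \gamma A - I\rvert^2 + \tfrac{1}{2\lambda}\gamma^2\lvert b\rvert^2 + \tfrac{1}{\lambda^2}(\lambda - \gamma c)^2 = (n+1) - \frac{\bigl(\mathrm{tr}(A) + \tfrac{1}{\lambda}c\bigr)^2}{\lvert A\rvert^2 + \tfrac{1}{2\lambda}\lvert b\rvert^2 + \tfrac{1}{\lambda^2}c^2} \leq (n+1)-(n+\delta) = 1-\delta,
\end{align*}
the last step being exactly the Cordes condition (C2); the case (C1) is the same computation with $\lambda=0$ and $\gamma=\mathrm{tr}(A)/\lvert A\rvert^2$.

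For the coercivity (ii) I would then split $\gamma Lv = (\Delta v - \lambda v) + \bigl(\gamma Lv - (\Delta v - \lambda v)\bigr)$ to get
\begin{align*}
a(v,v) \geq \|\Delta v - \lambda v\|_{L^2(\Omega)}^2 - \|\gamma Lv - (\Delta v - \lambda v)\|_{L^2(\Omega)}\,\|\Delta v - \lambda v\|_{L^2(\Omega)}.
\end{align*}
Combining the pointwise bound above with the integration-by-parts identity $\|\Delta v - \lambda v\|_{L^2(\Omega)}^2 = \|\Delta v\|_{L^2(\Omega)}^2 + 2\lambda\|\nabla v\|_{L^2(\Omega)}^2 + \lambda^2\|v\|_{L^2(\Omega)}^2$ (legitimate since $v\in H^1_0(\Omega)$) and the Miranda--Talenti bound $\|D^2 v\|_{L^2(\Omega)}\leq\|\Delta v\|_{L^2(\Omega)}$, one obtains $\|\gamma Lv - (\Delta v - \lambda v)\|_{L^2(\Omega)}\leq\sqrt{1-\delta}\,\|\Delta v - \lambda v\|_{L^2(\Omega)}$, hence the first claimed inequality $a(v,v)\geq(1-\sqrt{1-\delta})\|\Delta v - \lambda v\|_{L^2(\Omega)}^2$. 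The second inequality then follows from $\|\Delta v-\lambda v\|_{L^2(\Omega)}^2\geq\|\Delta v\|_{L^2(\Omega)}^2=\|D^2 v\|_{L^2(\Omega)}^2$ (again Miranda--Talenti, now as an equality-type bound from below) together with the Friedrichs inequality on $H^1_0(\Omega)$, which controls $\|v\|_{L^2(\Omega)}$ and $\|\nabla v\|_{L^2(\Omega)}$ by $\|\Delta v\|_{L^2(\Omega)}$; this fixes $\alpha$ in terms of $\mathrm{diam}(\Omega)$, $n$, and $\delta$ only.

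The remaining assertions are routine. For (i), since $v_1$ (together with all its derivatives) vanishes outside $\omega_1$ and $\Delta v_2-\lambda v_2$ vanishes outside $\omega_2$, the integrand defining $a(v_1,v_2)$ is supported in $\omega_1\cap\omega_2$; Cauchy--Schwarz gives $\lvert a(v_1,v_2)\rvert\leq\|\gamma\|_{L^\infty(\Omega)}\|Lv_1\|_{L^2(\omega_1\cap\omega_2)}\|\Delta v_2-\lambda v_2\|_{L^2(\omega_1\cap\omega_2)}$, and one bounds $\|Lv_1\|_{L^2}\leq\|A\|_{L^\infty}\|D^2 v_1\|+\|b\|_{L^\infty}\|\nabla v_1\|+\|c\|_{L^\infty}\|v_1\|$ and $\|\Delta v_2-\lambda v_2\|_{L^2}\leq\sqrt n\,\|D^2 v_2\|+\lambda\|v_2\|$ (all $L^2$-norms over $\omega_1\cap\omega_2$), absorbing $\|A\|_{L^\infty}\lesssim\zeta_2$, $\lambda$, $\|b\|_{L^\infty}$, $\|c\|_{L^\infty}$ and $\|\gamma\|_{L^\infty(\Omega)}\leq\Gamma$ into $C_a$. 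Assertion (iii) is obtained identically: $\lvert\langle F,v\rangle\rvert\leq\|\gamma\|_{L^\infty(\Omega)}\|f\|_{L^2(\Omega)}\|\Delta v-\lambda v\|_{L^2(\Omega)}\leq\Gamma(\sqrt n+\lambda)\|f\|_{L^2(\Omega)}\|v\|_V$. The only genuinely nontrivial inputs are the pointwise Cordes inequality of Step~1 (an algebraic identity that must be matched precisely against the normalizations in (C1)/(C2)) and the Miranda--Talenti inequality for $H^2\cap H^1_0$ functions on a convex domain; both are standard and available from \cite{SS13,SS14}, so I do not expect any real obstacle, merely careful bookkeeping of constants.
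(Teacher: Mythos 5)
Your proposal is correct and follows essentially the same route as the paper, which only sketches the argument for (ii) by quoting the pointwise Cordes inequality and the Miranda--Talenti estimates from \cite{SS13,SS14}; your algebraic verification of the Cordes prefactor bound $1-\delta$ (for both (C1) and (C2)) and the Cauchy--Schwarz treatment of (i) and (iii) are precisely what the paper leaves implicit. One small slip: in the final step of (ii) you write $\|\Delta v\|_{L^2(\Omega)}^2 = \|D^2 v\|_{L^2(\Omega)}^2$, which should read $\|\Delta v\|_{L^2(\Omega)}^2 \geq \|D^2 v\|_{L^2(\Omega)}^2$ (Miranda--Talenti is an inequality, with equality only for, e.g., $\Omega=\R^n$), but since this is exactly the direction needed, the argument is unaffected.
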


The proofs of assertions (i) and (iii) of \cref{Lmm: prop of a F} are straightforward. A proof of assertion (ii) of \cref{Lmm: prop of a F} can be found in \cite{SS13,SS14}, relying on the observation that the Cordes condition implies that for any subdomain $\omega\subset\Omega$ we have that (see Lemma 1 in \cite{SS14})
\begin{align*}
\forall v\in H^2(\omega):\qquad\lvert \gamma\, Lv-(\Delta v-\lambda v)\rvert \leq \sqrt{1-\delta}\sqrt{\lvert D^2 v\rvert^2 + 2\lambda \lvert \nabla v\rvert^2 + \lambda^2 v^2  }\quad\text{a.e. in }\omega,
\end{align*}
and using the Miranda--Talenti-type estimates (see Theorem 2 in \cite{SS14})
\begin{align*}
\|D^2 v\|_{L^2(\Omega)}^2 + 2\lambda \|\nabla v\|_{L^2(\Omega)}^2 + \lambda^2 \|v\|_{L^2(\Omega)}^2 &\leq \|\Delta v - \lambda v\|_{L^2(\Omega)}^2\qquad\quad\;\;\; \forall v\in V, \\ \|v\|_{V}&\leq C_{\mathrm{MT}}\|\Delta v-\lambda v\|_{L^2(\Omega)}\qquad \forall v\in V,
\end{align*}
with a constant $C_{\mathrm{MT}}>0$ depending only on $\mathrm{diam}(\Omega)$ and $n$.  

\begin{rmrk}\label{Rk: H2 bound on u}
In view of \cref{Thm: Well-posed u problem,Lmm: prop of a F}, the unique strong solution $u\in V$ to \cref{u prob} satisfies the bound
\begin{align*}
\|u\|_V \leq \alpha^{-1}\|F\|_{V^*}\leq   \alpha^{-1}\mu\|f\|_{L^2(\Omega)},
\end{align*}
where $\alpha,\mu> 0$ are the constants from \cref{Lmm: prop of a F}(ii)--(iii).
\end{rmrk}

It is worth emphasizing that in the setting of periodic homogenization, i.e., $A = \tilde{A}(\frac{\cdot}{\eps})$, $b = \tilde{b}(\frac{\cdot}{\eps})$, $c = \tilde{c}(\frac{\cdot}{\eps})$ for some small parameter $\varepsilon>0$ and $(0,1)^n$-periodic $\tilde{A},\tilde{b},\tilde{c}\in L^{\infty}(\R^n)$ satisfying the Cordes condition in $\R^n$, we have that the $H^2(\Omega)$-norm of the solution to \eqref{u prob} is uniformly bounded in $\eps\in (0,1)$, while generically the $H^{2+s}(\Omega)$-norm is unbounded as $\eps\searrow 0$ for any $s>0$. Note that this is different to the usual periodic homogenization setting for divergence-form equations where generically the $H^2(\Omega)$-norm is unbounded as $\eps\searrow 0$.

\section{Numerical homogenization scheme}\label{sec:Numerical homogenization scheme}

For simplicity, we only work in dimension $n = 2$ and give some remarks on numerical homogenization in higher dimensions in \cref{Sec: ext and fw}. 

\subsection{Fine-scale space}

We start by introducing a triangulation of the bounded convex polygon $\Omega\subset \R^2$. Thereafter, we define a certain closed linear subspace $W$ of $V$ (recall the definition of $V$ from \cref{V defn}) which will be referred to as the fine-scale space. 

\subsubsection{Triangulation}

Let $\calT_H$ be a regular quasi-uniform triangulation of $\Omega$ into closed triangles with mesh-size $H>0$ and shape-regularity parameter $\rho>0$ given by
\begin{align*}
H \coloneqq  \max_{T\in \calT_H} \mathrm{diam}(T),\qquad \rho \coloneqq  H^{-1} \min_{T\in \calT_H} \rho_T, 
\end{align*}
where $\rho_T$ denotes the diameter of the largest ball which can be inscribed in the element $T$. We introduce the piecewise constant mesh-size function $h_{\calT_H}:\bar{\Omega}\rightarrow \R$ given by $\left.h_{\calT_H}\right\rvert_T \coloneqq  H_T \coloneqq  \lvert T\rvert^{\frac{1}{2}}$ for $T\in \calT_H$. Let $\calF_H$ denote the set of edges, $\calN_H^{\mathrm{int}}$ the set of interior vertices, $\calN_H^{\partial}$ the set of boundary vertices, and define 
\begin{align*}
N_1 \coloneqq  \lvert \calF_H\rvert,\qquad N_2\coloneqq \lvert \calN_H^{\mathrm{int}}\rvert,\qquad N\coloneqq  N_1 + N_2.
\end{align*}  
We label the edges $F_1,\dots,F_{N_1}$ and the interior vertices $z_1,\dots,z_{N_2}$, so that 
\begin{align*}
\calF_{H} = \{F_1,F_2,\dots,F_{N_{1}}\},\qquad \calN_H^{\mathrm{int}} = \{z_1,z_2,\dots, z_{N_2}\}.
\end{align*}
For each edge $F\in \calF_H$ we associate a fixed choice of unit normal $\nu_F$, where we often drop the subscript and only write $\nu$ for simplicity. Finally, for a subset $S\subset \Omega$, we define $N^0(S)\coloneqq S$ and $N^\ell(S) \coloneqq  \bigcup \{T\in \calT_H:T\cap N^{\ell-1}(S)\neq \emptyset\}$ for $\ell\in\N$.

\subsubsection{Quantities of interest and the space of fine-scale functions}

First, let us note that $V\subset C(\bar{\Omega})$ by Sobolev embeddings. For $i\in \{1,\dots,N\}$, we define the quantity of interest $q_i\in V^{\ast}$ by
\begin{align*}
q_i:V\rightarrow \R,\qquad v\mapsto \langle q_i,v\rangle \coloneqq  \begin{cases}
\meanint_{F_i} \nabla v\cdot \nu&,\text{ if }1\leq i\leq N_{1},\\
v(z_{i-N_{1}})&,\text{ if }N_{1}+1 \leq i\leq  N.
\end{cases}
\end{align*}
The quantities of interest $\{q_1,\dots,q_N\}\subset V^{\ast}$ are linearly independent as can be seen from the fact that there exist functions $u_1,\dots,u_N\in V$ such that $\langle q_i,u_j\rangle = \delta_{ij}$ for any $i,j\in\{1,\dots,N\}$; see \cref{Sec: constr locbas}. We define the fine-scale space
\begin{align}\label{W defn}
W\coloneqq  \bigcap_{i\in \{1,\dots,N\}} \mathrm{ker}(q_i) = \{\left.v\in V\,\right\rvert \langle q_{i},v\rangle = 0\;\;\forall\,i\in\{1,\dots,N\}\},
\end{align}
which is a closed linear subspace of $V$. 

\subsubsection{Connection to the Morley finite element space}\label{Sec: Conn to Mor}
We consider the Morley finite element space
\begin{align*}
V_H^{\mathrm{Mor}} \coloneqq  \{\left.v\in \calP_2(\calT_H)\,\right\rvert &v \text{ is continuous at }\calN_H^{\mathrm{int}} \text{ and vanishes at }\calN_H^{\partial}; \\& D_{\mathrm{NC}}v\text{ is continuous at the interior edges' midpoints} \},
\end{align*}
whose local degrees of freedom are the evaluation of the function at each vertex and the evaluation of the normal derivative at the edges' midpoints. Here, the piecewise action of the differential operator $D$ is indicated by the subscript $\mathrm{NC}$, i.e., we define $\left.( D_{\mathrm{NC}}v)\right\rvert_T := \left.D(v\right\rvert_T)$ for any $T\in \calT_H$. Then, letting $\{\phi_1,\dots,\phi_N\}\subset V_H^{\mathrm{Mor}}$ denote Morley basis functions satisfying $\langle q_i,\phi_j\rangle = \delta_{ij}$ for all $i,j\in\{1,\dots,N\}$ (note $\langle q_i,\phi_j\rangle$ is well-defined although $\phi_j\not\in V$), we have that the Morley interpolation operator is given by
\begin{align}\label{PiMo}
\Pi^{\mathrm{Mor}}:V\rightarrow V_H^{\mathrm{Mor}},\qquad v\mapsto \Pi^{\mathrm{Mor}}v\coloneqq  \sum_{i=1}^N \langle q_i,v\rangle \phi_i, 
\end{align}
and we observe that
\begin{align*}
W = \mathrm{ker}(\Pi^{\mathrm{Mor}}) = \{\left.v\in V\,\right\rvert \Pi^{\mathrm{Mor}}v = 0\}.
\end{align*}
In particular, using Morley interpolation bounds (see e.g., \cite{WX13}), we have the local estimate
\begin{align}\label{bd for w in W loc}
\|w\|_{L^2(T)} + H_T\|\nabla w\|_{L^2(T)} \lesssim H_T^2 \|D^2 w\|_{L^2(T)}\qquad\forall T\in \calT_H\quad \forall w\in W,
\end{align}
and the global bound
\begin{align}\label{bd for w in W}
\|w\|_{L^2(\Omega)} + H\|\nabla w\|_{L^2(\Omega)}  \lesssim H^2 \|D^2 w\|_{L^2(\Omega)}\qquad \forall w\in W.
\end{align}

\subsubsection{Projectors onto the fine-scale space}\label{Subsubsec: C Cstar}

We introduce the maps
\begin{align*}
\calC:V&\rightarrow W,\qquad v\mapsto \calC v,\\ \calC^{\ast}:V&\rightarrow W,\qquad v\mapsto \calC^{\ast} v,
\end{align*}
where for $v\in V$ we define $\calC v\in W$ to be the unique function in $W$ that satisfies
\begin{align*}
a(\calC v,w) = a(v,w)\qquad \forall w\in W,
\end{align*} 
and we define $\calC^{\ast}v\in W$ to be the unique function in $W$ that satisfies
\begin{align*}
a(w,\calC^{\ast}v) = a(w,v)\qquad \forall w\in W.
\end{align*} 
\begin{rmrk}\label{Rk: C well def}
In view of \cref{Lmm: prop of a F}, we have by the Lax--Milgram theorem that the maps $\calC,\calC^{\ast}$ are well-defined, and we have the bounds 
\begin{align*}
\|\calC v\|_V \leq \alpha^{-1}C_a \|v\|_V,\qquad \|\calC^{\ast} v\|_V \leq \alpha^{-1} C_a \|v\|_V\qquad \forall v\in V.
\end{align*}
Further, the maps $\calC,\calC^{\ast}$ are surjective and continuous projectors onto $W$, and we have that 
\begin{align*}
W = \mathrm{ker}(1-\calC) = \mathrm{ker}(1-\calC^{\ast}).
\end{align*}
\end{rmrk}

\subsection{Ideal numerical homogenization scheme}

\subsubsection{$a$-orthogonal decompositions of $V$}

We define the trial space $\tilde{U}_H\subset V$ and the test space $\tilde{V}_H\subset V$ by
\begin{align}\label{UHtilde}
\tilde{U}_H \coloneqq  (1-\calC)V,\qquad \tilde{V}_H \coloneqq  (1-\calC^{\ast})V.
\end{align}
In view of \cref{Rk: C well def}, we then have the following decompositions of the space $V$:
\begin{align}\label{a ort dec}
V = (1-\calC)V\oplus \calC V = \tilde{U}_H \oplus W,\qquad
V = (1-\calC^{\ast})V\oplus \calC^{\ast} V = \tilde{V}_H\oplus W. 
\end{align}
We state a few observations below.
\begin{lmm}[Properties of $\tilde{U}_H$ and $\tilde{V}_H$]\label{Lmm: Prop UHtilde} The following assertions hold true.
\begin{itemize}
\item[(i)] We have that $\dim(\tilde{U}_H) = \dim(\tilde{V}_H) = N$.
\item[(ii)] The decompositions \cref{a ort dec}  are $a$-orthogonal in the sense that $a(\tilde{U}_H,W) = 0$ and $a(W,\tilde{V}_H) = 0$.
\item[(iii)] We can equivalently characterize the spaces $\tilde{U}_H$ and $\tilde{V}_H$ via
\begin{align*}
\tilde{U}_H &= \{\left.v\in V \,\right\rvert a(v,w) = 0\;\forall w\in W \},\\
\tilde{V}_H &= \{\left.v\in V \,\right\rvert a(w,v) = 0\;\forall w\in W \}.
\end{align*}
\item[(iv)] We have that $\tilde{U}_H = \mathrm{span}(\calA^{-1} q_{1},\dots,\calA^{-1} q_{N})$.
\end{itemize}
\end{lmm}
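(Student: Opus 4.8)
The plan is to establish the four claims in the order (ii), (iii), (i), (iv), each building on its predecessors, working with $\calC$ on the trial side; the statements for $\tilde{V}_H$ follow verbatim with $\calC^{\ast}$ in place of $\calC$. Claim (ii) is immediate from the definition of $\calC$: any $v\in\tilde{U}_H$ can be written $v=(1-\calC)u$ with $u\in V$, and then $a(v,w)=a(u,w)-a(\calC u,w)=0$ for every $w\in W$; in particular this gives the inclusion ``$\subseteq$'' in (iii). For ``$\supseteq$'', let $v\in V$ with $a(v,w)=0$ for all $w\in W$. The function $\calC v\in W$ satisfies $a(\calC v,w)=a(v,w)=0$ for all $w\in W$, so testing with $w=\calC v$ and using coercivity (\cref{Lmm: prop of a F}(ii)) yields $\alpha\|\calC v\|_V^2\le a(\calC v,\calC v)=0$, hence $\calC v=0$ and $v=(1-\calC)v\in\tilde{U}_H$.

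For (i), I would observe that $1-\calC\colon V\to\tilde{U}_H$ is surjective by the very definition of $\tilde{U}_H$ and has kernel exactly $W$: since $\calC$ is a projector onto $W$ (\cref{Rk: C well def}), we have $(1-\calC)v=0$ iff $v=\calC v$ iff $v\in W$. Hence $\tilde{U}_H\cong V/W$ as vector spaces. On the other hand $W=\bigcap_{i=1}^N\mathrm{ker}(q_i)$ and the $q_i$ are linearly independent in $V^{\ast}$, so the map $v\mapsto(\langle q_1,v\rangle,\dots,\langle q_N,v\rangle)$ is a surjection $V\to\R^N$ with kernel $W$; therefore $\dim(V/W)=N$ and $\dim\tilde{U}_H=N$. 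The same argument with $\calC^{\ast}$ gives $\dim\tilde{V}_H=N$.

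Finally, for (iv), each $\calA^{-1}q_i\in V$ satisfies $a(\calA^{-1}q_i,v)=\langle q_i,v\rangle$ for all $v\in V$, so $a(\calA^{-1}q_i,w)=\langle q_i,w\rangle=0$ for every $w\in W$ and hence $\calA^{-1}q_i\in\tilde{U}_H$ by (iii); thus $\mathrm{span}(\calA^{-1}q_1,\dots,\calA^{-1}q_N)\subseteq\tilde{U}_H$. Since $\calA$ is a linear isomorphism $V\to V^{\ast}$ (\cref{Thm: Well-posed u problem}, \cref{Lmm: prop of a F}), linear independence of the $q_i$ transfers to the $\calA^{-1}q_i$, and comparing with $\dim\tilde{U}_H=N$ from (i) forces equality. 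The computations here are all routine; the one substantive ingredient is the dimension count in (i), which rests on the linear independence of $q_1,\dots,q_N$ on $V$ (equivalently, that $W$ has codimension $N$) — the fact asserted with proof deferred when the quantities of interest were introduced — together with the bookkeeping needed to pass from each $\calC$/$\calA$ statement to its $\calC^{\ast}$/transpose counterpart, the transpose bilinear form $(u,v)\mapsto a(v,u)$ being bounded and coercive with the same constants so that it too is invertible.
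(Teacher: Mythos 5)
Your proof is correct and follows essentially the same route as the paper's: (ii) from the definition of $\calC$, (iii) via coercivity applied to $\calC v$, (iv) from $a(\calA^{-1}q_i,w)=\langle q_i,w\rangle=0$ plus a dimension count. The only notable variation is in (i), where the paper computes $\dim(V/W)=N$ by passing to Riesz representatives $\hat q_i\in V$ and orthogonal complements in the Hilbert space $V$, whereas you use the purely linear-algebraic observation that linear independence of $q_1,\dots,q_N$ makes $v\mapsto(\langle q_1,v\rangle,\dots,\langle q_N,v\rangle)$ a surjection $V\to\R^N$ with kernel $W$ — a slightly more elementary phrasing of the same fact, combined with the same use of $\ker(1-\calC)=W$.
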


\begin{proof}
\begin{itemize}
\item[(i)] By the Riesz representation theorem, there exist $\hat{q}_1,\dots,\hat{q}_{N}\in V$ such that $q_{i} = (\,\cdot\,,\hat{q}_{i})_V$ in $V^{\ast}$ for $i\in\{1,\dots,N\}$. Set $S\coloneqq \mathrm{span}(\hat{q}_{1},\dots,\hat{q}_{N})$ and note that $\dim(S) = N$ as the quantities of interest $q_1,\dots, q_N$ are linearly independent. Then, in view of \cref{W defn}, we have that $W = S^{\perp}$ and there holds $V = W^{\perp} \oplus W =  S \oplus W$. The claim follows.
\item[(ii)] This follows immediately from the definition of the spaces $\tilde{U}_H$ and $\tilde{V}_H$ from \cref{UHtilde}, and the definitions of the projectors $\calC$ and $\calC^{\ast}$ from \cref{Subsubsec: C Cstar}. 
\item[(iii)] By the properties of the projectors $\calC$ and $\calC^{\ast}$ from \cref{Subsubsec: C Cstar}, we have that 
\begin{align*}
\{\left.v\in V \,\right\rvert a(v,w) = 0\;\forall w\in W \} &= \{\left.v\in V \,\right\rvert \calC v = 0 \} = (1-\calC)V = \tilde{U}_H,\\
\{\left.v\in V \,\right\rvert a(w,v) = 0\;\forall w\in W \} &= \{\left.v\in V \,\right\rvert \calC^{\ast} v = 0 \} = (1-\calC^{\ast})V= \tilde{V}_H.
\end{align*}
\item[(iv)] First, note that $\dim(\tilde{U}_H) = N =\dim(\mathrm{span}(\calA^{-1} q_{1},\dots,\calA^{-1} q_{N}))$. Next, we observe that for $i\in\{1,\dots,N\}$ we have that  $a(\calA^{-1} q_i,w) = \langle q_i,w\rangle = 0 $ for all $w\in W$, i.e., there holds $\calA^{-1} q_{1},\dots,\calA^{-1} q_{N} \in \tilde{U}_H$. It follows that $\tilde{U}_H = \mathrm{span}(\calA^{-1} q_{1},\dots,\calA^{-1} q_{N})$.
\end{itemize}
\end{proof}

\subsubsection{Ideal numerical homogenization}

The ideal discrete problem is the following:
\begin{align}\label{var problem discrete}
\text{Find }\tilde{u}_H\in \tilde{U}_H\text{ such that }\quad a(\tilde{u}_H,\tilde{v}_H) = \langle F,\tilde{v}_H\rangle  \quad \forall \tilde{v}_H\in \tilde{V}_H.
\end{align}
\begin{thrm}[Analysis of the ideal discrete problem]\label{Thm: Analysis ideal method}
There exists a unique solution $\tilde{u}_H\in \tilde{U}_H$ to the ideal discrete problem \cref{var problem discrete}. Further, denoting the unique strong solution to \cref{u prob} by $u\in V$, the following assertions hold true.
\begin{itemize}
\item[(i)] We have the bound \begin{align*}
\|\tilde{u}_H\|_V \leq \alpha^{-2} C_a \|F\|_{(\tilde{V}_H)^{\ast}} \leq \alpha^{-2}\mu\, C_a  \|f\|_{L^2(\Omega)}.
\end{align*}
\item[(ii)] We have that $u-\tilde{u}_H = \calC u\in W$ and hence, \begin{align*}
\langle q_i,\tilde{u}_H\rangle = \langle q_i,u\rangle\qquad \forall i\in \{1,\dots,N\},
\end{align*}
i.e., the quantities of interest are conserved.
\item[(iii)] We have the error bound
\begin{align*}
\|u-\tilde{u}_H\|_{L^2(\Omega)} + H\|\nabla(u-\tilde{u}_H)\|_{L^2(\Omega)}\lesssim H^2 \|f\|_{L^2(\Omega)}
\end{align*}
for the approximation of the true solution $u\in V$ by $\tilde{u}_H\in \tilde{U}_H$. 
\end{itemize}

\end{thrm}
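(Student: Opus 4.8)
The plan is to produce an explicit solution of \cref{var problem discrete}, then establish a discrete inf-sup condition that yields uniqueness together with the bound (i), and finally to read off (ii) and (iii) from the identity $u-\tilde{u}_H=\calC u$. \emph{Existence and the identity.} Let $u\in V$ be the strong solution of \cref{u prob} and set $\tilde{u}_H\coloneqq(1-\calC)u$; by the definition \cref{UHtilde} this lies in $\tilde{U}_H$. For any $\tilde{v}_H\in\tilde{V}_H$ I would use that $\calC u\in W$ and $a(W,\tilde{V}_H)=0$ (\cref{Lmm: Prop UHtilde}(ii)) to write $a(\tilde{u}_H,\tilde{v}_H)=a(u,\tilde{v}_H)-a(\calC u,\tilde{v}_H)=a(u,\tilde{v}_H)=\langle F,\tilde{v}_H\rangle$, the last equality because $u$ solves \cref{eq:weak} and $\tilde{v}_H\in V$. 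Hence $\tilde{u}_H$ solves \cref{var problem discrete}, and by construction $u-\tilde{u}_H=\calC u\in W$.

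\emph{Uniqueness and bound (i).} Since $\dim\tilde{U}_H=\dim\tilde{V}_H=N<\infty$ (\cref{Lmm: Prop UHtilde}(i)), it suffices to prove a discrete inf-sup estimate for $a$ on $\tilde{U}_H\times\tilde{V}_H$. Given $\tilde{u}_H\in\tilde{U}_H$, the natural test function is $\tilde{v}_H\coloneqq(1-\calC^{\ast})\tilde{u}_H\in\tilde{V}_H$: decomposing $\tilde{u}_H=\tilde{v}_H+\calC^{\ast}\tilde{u}_H$ with $\calC^{\ast}\tilde{u}_H\in W$ and using $a(\tilde{U}_H,W)=0$, the $W$-component drops out, so $a(\tilde{u}_H,\tilde{v}_H)=a(\tilde{u}_H,\tilde{u}_H)\geq\alpha\|\tilde{u}_H\|_V^2$ by coercivity (\cref{Lmm: prop of a F}(ii)). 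The same cancellation gives $a(\tilde{v}_H,\tilde{v}_H)=a(\tilde{u}_H,\tilde{u}_H)$, so coercivity and boundedness of $a$ (\cref{Lmm: prop of a F}) bound $\|\tilde{v}_H\|_V$ by a multiple of $\|\tilde{u}_H\|_V$. Combining these facts gives a discrete inf-sup constant controlled from below in terms of $\alpha$ and $C_a$ (using $\alpha\leq C_a$), which upon testing \cref{var problem discrete} yields $\|\tilde{u}_H\|_V\leq\alpha^{-2}C_a\|F\|_{(\tilde{V}_H)^{\ast}}$ and, in particular, uniqueness; the second inequality in (i) then follows from \cref{Lmm: prop of a F}(iii) together with $\|F\|_{(\tilde{V}_H)^{\ast}}\leq\|F\|_{V^{\ast}}$. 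I would not dwell on tracking the exact powers of $\alpha,C_a$.

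\emph{(ii) and (iii).} Assertion (ii) is immediate: by uniqueness $\tilde{u}_H=(1-\calC)u$, so $u-\tilde{u}_H=\calC u\in W=\bigcap_{i}\mathrm{ker}(q_i)$ and hence $\langle q_i,\tilde{u}_H\rangle=\langle q_i,u\rangle$ for every $i$. For (iii) I would apply the global bound \cref{bd for w in W} to $w=\calC u\in W$, giving $\|u-\tilde{u}_H\|_{L^2(\Omega)}+H\|\nabla(u-\tilde{u}_H)\|_{L^2(\Omega)}\lesssim H^2\|D^2\calC u\|_{L^2(\Omega)}\leq H^2\|\calC u\|_V$, and then estimate $\|\calC u\|_V\leq\alpha^{-1}C_a\|u\|_V\leq\alpha^{-2}C_aC_F\|f\|_{L^2(\Omega)}$ via \cref{Rk: C well def} and \cref{Rk: H2 bound on u}.

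The main obstacle is the discrete well-posedness/inf-sup step in the second paragraph — the crux of every LOD-type argument — and the point that makes it routine here is that the $a$-orthogonality of the decompositions \cref{a ort dec} lets the test function $(1-\calC^{\ast})\tilde{u}_H$ capture all of $\tilde{u}_H$ up to a $W$-component that $a$ annihilates. Everything else reduces to bookkeeping with constants already recorded in \cref{Lmm: prop of a F} and \cref{Rk: C well def,Rk: H2 bound on u}.
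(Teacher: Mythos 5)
Your argument is correct and follows essentially the same route as the paper: test with $\tilde{v}_H=(1-\calC^{\ast})\tilde{u}_H$, exploit the $a$-orthogonality $a(\tilde{U}_H,W)=0=a(W,\tilde{V}_H)$ to get the inf-sup condition, and use the global bound \cref{bd for w in W} on $\calC u\in W$ for the error estimate. The one organizational difference worth noting: the paper invokes the Babuška--Lax--Milgram theorem, which requires verifying both the inf-sup condition for $\tilde{U}_H$ and the nondegeneracy condition (a second, symmetric inf-sup estimate over $\tilde{V}_H$) to obtain existence; you instead produce the solution $\tilde{u}_H=(1-\calC)u$ explicitly, verify it by a one-line computation, and then need only the single inf-sup estimate for uniqueness and the bound~(i). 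This also makes~(ii) immediate by construction rather than via the paper's Galerkin-orthogonality argument. A further small variation is your bound $\|\tilde{v}_H\|_V\lesssim\|\tilde{u}_H\|_V$ via the identity $a(\tilde{v}_H,\tilde{v}_H)=a(\tilde{u}_H,\tilde{u}_H)$ together with coercivity and boundedness, where the paper uses the operator-norm bound $\|1-\calC^{\ast}\|\leq\alpha^{-1}C_a$; this in fact yields a slightly sharper inf-sup constant $\alpha^{3/2}C_a^{-1/2}\geq\alpha^2 C_a^{-1}$, so your bound in (i) is consistent with (and no weaker than) the stated one.
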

\begin{proof}
First, recall the properties of $a$ and $F$ from \cref{Lmm: prop of a F}. Next, we note that for any $\tilde{u}_H\in \tilde{U}_H$ we have that
\begin{align*}
\sup_{\tilde{v}_H\in \tilde{V}_H\backslash\{0\}} \frac{\lvert a(\tilde{u}_H,\tilde{v}_H)\rvert}{\|\tilde{v}_H\|_V} \geq \frac{\lvert a(\tilde{u}_H,(1-\calC^{\ast})\tilde{u}_H)\rvert}{\|(1-\calC^{\ast})\tilde{u}_H\|_V} \geq \frac{\lvert a(\tilde{u}_H,\tilde{u}_H)\rvert}{\alpha^{-1}C_a \|\tilde{u}_H\|_V}  \geq \alpha^2 C_a^{-1} \|\tilde{u}_H\|_V,
\end{align*}
where we have used \cref{Lmm: Prop UHtilde}(ii), the fact that $\|1-\calC^{\ast}\| = \|\calC^{\ast}\|$ (see \cite{Szy06}), and that $\|\calC^{\ast}\| \leq \alpha^{-1} C_a$ by \cref{Rk: C well def}. Similarly, for any $\tilde{v}_H\in \tilde{V}_H$ we have that
\begin{align*}
\sup_{\tilde{u}_H\in \tilde{U}_H\backslash\{0\}} \frac{\lvert a(\tilde{u}_H,\tilde{v}_H)\rvert}{\|\tilde{u}_H\|_V} \geq \frac{\lvert a((1-\calC)\tilde{v}_H,\tilde{v}_H)\rvert}{\|(1-\calC)\tilde{v}_H\|_V} \geq \frac{\lvert a(\tilde{v}_H,\tilde{v}_H)\rvert}{\alpha^{-1}C_a \|\tilde{v}_H\|_V}  \geq \alpha^2 C_a^{-1} \|\tilde{v}_H\|_V.
\end{align*}
By the Babuška--Lax--Milgram theorem, there exists a unique solution $\tilde{u}_H\in \tilde{U}_H$ to the ideal discrete problem \cref{var problem discrete} and we obtain (i). It only remains to show (ii) and (iii).

(ii) We show that $u-\tilde{u}_H = \calC u \in W$. Observing that we have the Galerkin orthogonality (recall $\tilde{V}_H\subset V$) 
\begin{align*}
a(u-\tilde{u}_H,\tilde{v}_H) = 0\qquad \forall \tilde{v}_H\in \tilde{V}_H,
\end{align*}
we find that $u-\tilde{u}_H\in W$ by \cref{Lmm: Prop UHtilde}(ii) and \cref{a ort dec}. Finally, as $u-\tilde{u}_H\in W$, we have that $u-\tilde{u}_H = \calC(u-\tilde{u}_H) = \calC u - \calC \tilde{u}_H =  \calC u$. Here, we have used that $\calC \tilde{u}_H = 0$ by the definition of $\tilde{U}_H$ from \cref{UHtilde} and the properties of $\calC$ from \cref{Rk: C well def}. 

(iii) First, we note that by \cref{Rk: C well def,Rk: H2 bound on u} we have the bound
\begin{align*}
\|\calC u\|_{V} \leq \alpha^{-1} C_a\|u\|_V \leq  \alpha^{-2}\mu\, C_a  \|f\|_{L^2(\Omega)}.
\end{align*}
In view of the fact that $u-\tilde{u}_H = \calC u\in W$ (see (ii)) and using the bound \cref{bd for w in W}, we deduce that
\begin{align*}
\|u-\tilde{u}_H\|_{L^2(\Omega)} + H\|\nabla (u-\tilde{u}_H)\|_{L^2(\Omega)}  \lesssim H^2 \|\calC u\|_{V} \lesssim H^2\|f\|_{L^2(\Omega)},
\end{align*}
which concludes the proof.  
\end{proof}

\subsection{Construction of a coarse-scale space}

\subsubsection{Construction of a local basis}\label{Sec: constr locbas}

We are going to construct functions $u_1,\dots,u_N\in V$ with local support that satisfy $\langle q_i,u_j\rangle = \delta_{ij}$ for any $i,j\in \{1,\dots,N\}$.

To this end, we introduce the Hsieh--Clough--Tocher (HCT) finite element space
\begin{align*}
V_H^{\mathrm{HCT}}\coloneqq \{v\in V: \left. v\right\rvert_T \in \calP_3(\calK_H(T)) \;\forall\, T\in \calT_H\},
\end{align*}
where $\calK_H(T)$ denotes the triangulation of the triangle $T$ into three sub-triangles with shared vertex $\mathrm{mid}(T)$, and we make use of the HCT enrichment operator $E_H:V_H^{\mathrm{Mor}} \rightarrow V_H^{\mathrm{HCT}}$ defined in \cite[Prop. 2.5]{Gal15}. We then define the operator
\begin{align*}
\tilde{E}_H:V_H^{\mathrm{Mor}} \rightarrow V,\quad v \mapsto \tilde{E}_H v\coloneqq  E_H v + \sum_{i=1}^{N_{1}} \left[ \meanint_{F_i} \nabla(v-E_H v)\cdot  \nu \right]\zeta_{F_i},
\end{align*}
where $\zeta_{F_i}\in V$ is the function from \cite[Proof of Prop. 2.6]{Gal15} which satisfies
\begin{align*}
\meanint_{F_i} \nabla \zeta_{F_i}\cdot \nu = 1,\qquad \zeta_{F_i}(z) = 0\quad\forall z\in \calN_H^{\mathrm{int}},
\end{align*}
and $\supp(\zeta_{F_i}) \subset \bar{\omega}_{F_i}$, where $\bar{\omega}_{F_i}$ denotes the closure of the union of the two elements that share the edge $F_i$. For any $v_H^{\mathrm{Mor}}\in V_H^{\mathrm{Mor}}$, we have that
\begin{align}\label{EHtilde preserves qi}
\begin{split}
\meanint_{F} \nabla (\tilde{E}_H v_H^{\mathrm{Mor}})\cdot \nu &= \meanint_{F} \nabla v_H^{\mathrm{Mor}}\cdot \nu\qquad \forall F\in \calF_H, \\ (\tilde{E}_H v_H^{\mathrm{Mor}})(z) &= v_H^{\mathrm{Mor}}(z) \hspace{1.7cm} \forall z\in \calN_H^{\mathrm{int}},
\end{split}
\end{align}
i.e., $\tilde{E}_H$ preserves the quantities of interest $q_1,\dots,q_N$. Further, we have the bound
\begin{align*}
&\|h_{\calT_H}^{-2} (v_{H}^{\mathrm{Mor}}- E_H v_{H}^{\mathrm{Mor}})\|_{L^2(\Omega)} + \|h_{\calT_H}^{-1} \nabla_{\mathrm{NC}}(v_{H}^{\mathrm{Mor}}- E_H v_{H}^{\mathrm{Mor}})\|_{L^2(\Omega)} + \|D^2_{\mathrm{NC}}(v_{H}^{\mathrm{Mor}}-E_H v_{H}^{\mathrm{Mor}})\|_{L^2(\Omega)} \\&\lesssim \min_{v\in V}\|D^2_{\mathrm{NC}}(v_H^{\mathrm{Mor}}-v)\|_{L^2(\Omega)}\qquad \forall v_{H}^{\mathrm{Mor}}\in V_{H}^{\mathrm{Mor}},
\end{align*}
where the subscript $\mathrm{NC}$ indicates the piecewise action of a differential operator with respect to the triangulation $\calT_H$, and we have that
\begin{align}\label{EHtildebd}
\begin{split}
&\|h_{\calT_H}^{-2} (v_{H}^{\mathrm{Mor}}- \tilde{E}_H v_{H}^{\mathrm{Mor}})\|_{L^2(\Omega)} + \|h_{\calT_H}^{-1} \nabla_{\mathrm{NC}}(v_{H}^{\mathrm{Mor}}- \tilde{E}_H v_{H}^{\mathrm{Mor}})\|_{L^2(\Omega)} + \|D^2_{\mathrm{NC}}(v_{H}^{\mathrm{Mor}}- \tilde{E}_H v_{H}^{\mathrm{Mor}})\|_{L^2(\Omega)} \\&\lesssim \min_{v\in V}\|D^2_{\mathrm{NC}}(v_H^{\mathrm{Mor}}-v)\|_{L^2(\Omega)}\qquad \forall v_{H}^{\mathrm{Mor}}\in V_{H}^{\mathrm{Mor}}.
\end{split}
\end{align}
The proofs of \cite[Prop. 2.5--2.6]{Gal15} furthermore show the quasi-local bound
\begin{align}\label{EHtildebd_loc}
\begin{split}
&\|h_{\calT_H}^{-2} (v_{H}^{\mathrm{Mor}}- \tilde{E}_H v_{H}^{\mathrm{Mor}})\|_{L^2(T)} 
+ \|h_{\calT_H}^{-1} \nabla_{\mathrm{NC}}(v_{H}^{\mathrm{Mor}}- \tilde{E}_H v_{H}^{\mathrm{Mor}})\|_{L^2(T)} 
+ \|D^2_{\mathrm{NC}}(v_{H}^{\mathrm{Mor}}- \tilde{E}_H v_{H}^{\mathrm{Mor}})\|_{L^2(T)} 
\\
&\lesssim \min_{v\in V}\|D^2_{\mathrm{NC}}(v_H^{\mathrm{Mor}}-v)\|_{L^2(N^1(T))}\qquad \forall v_{H}^{\mathrm{Mor}}\in V_{H}^{\mathrm{Mor}}
\end{split}
\end{align}
for any $T\in\mathcal T_H$. We define the functions
\begin{align}\label{defn ui}
u_i \coloneqq  \tilde{E}_H\phi_i \in V,\qquad i\in \{1,\dots,N\},
\end{align}
where $\phi_1,\dots,\phi_N\in V_{H}^{\mathrm{Mor}}$ are the Morley basis functions from \cref{Sec: Conn to Mor}, and we set
\begin{align*}
U_H\coloneqq \mathrm{span}(u_1,\dots,u_N)\subset V.
\end{align*}
By \cref{EHtilde preserves qi} and the definition of the Morley basis functions there holds 
\begin{align}\label{uj main prop}
\langle q_i,u_j\rangle = \delta_{ij}\qquad \forall i,j\in \{1,\dots,N\}
\end{align}
and we have that $\Omega_i\coloneqq \supp(u_i)\subset N^1(\supp(\phi_i))$ for any $i\in\{1,\dots,N\}$. Further, we have the following result.
\begin{lmm}[Stability of basis representation]\label{Lmm: Stab of basis rep}
For any $u_H = \sum_{i=1}^N c_i u_i\in U_H$ with $c_i=\langle q_i,u_H\rangle$ for $i\in\{1,\dots,N\}$ we have that
\begin{align*}
\sum_{i=1}^N c_i^2 \|u_i\|_V^2 \lesssim H^{-4} \|u_H\|_V^2.
\end{align*}
\end{lmm}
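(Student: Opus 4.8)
The plan is to bound each coefficient $c_i^2\|u_i\|_V^2$ by the norm of $u_H$ restricted to a small patch around the $i$-th degree of freedom, and then sum, paying a factor of $H^{-4}$ that comes from the Morley interpolation estimate \cref{bd for w in W loc} applied on the reference-scale and from the finite overlap of the patches. First I would record the two ingredients. On one hand, the basis functions satisfy a norm bound: since $u_i = \tilde E_H\phi_i$ and $\phi_i$ is a Morley basis function normalized by $\langle q_j,\phi_i\rangle=\delta_{ij}$, a scaling argument on $\supp(\phi_i)$ together with \cref{EHtildebd} (or its local version \cref{EHtildebd_loc}) gives $\|u_i\|_V \lesssim H^{-1}$ — more precisely $\|D^2 u_i\|_{L^2(\Omega)}\lesssim H^{-1}$ with analogous lower-order terms controlled by \cref{bd for w in W}-type inverse estimates, so that $\|u_i\|_V^2\lesssim H^{-2}$. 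On the other hand, I need a lower-order-free way to extract $c_i = \langle q_i,u_H\rangle$ from $u_H$ in an $L^2$-type norm localized near the support of $\phi_i$.

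The key step is the second ingredient. Since $\langle q_i,u_j\rangle=\delta_{ij}$, for fixed $i$ write $u_H = \sum_j c_j u_j$ and note that $\Pi^{\mathrm{Mor}}u_H = \sum_j c_j \phi_j$ (using that $\tilde E_H$ preserves the quantities of interest, \cref{EHtilde preserves qi}). Thus $c_i$ is the value of the Morley interpolant $\Pi^{\mathrm{Mor}}u_H$ at the relevant degree of freedom, and by a standard scaling/inverse estimate for the Morley nodal functionals one gets $|c_i|\lesssim H^{-1}\|\Pi^{\mathrm{Mor}}u_H\|_{L^2(\omega_i)}$ where $\omega_i$ is the union of elements touching the $i$-th DOF. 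Combining with $\|u_i\|_V^2\lesssim H^{-2}$ yields
\begin{align*}
c_i^2\|u_i\|_V^2 \lesssim H^{-2}\cdot H^{-2}\|\Pi^{\mathrm{Mor}}u_H\|_{L^2(\omega_i)}^2 = H^{-4}\|\Pi^{\mathrm{Mor}}u_H\|_{L^2(\omega_i)}^2.
\end{align*}
Summing over $i$, the finite overlap of the patches $\omega_i$ (controlled by the shape-regularity parameter $\rho$) gives $\sum_i c_i^2\|u_i\|_V^2 \lesssim H^{-4}\|\Pi^{\mathrm{Mor}}u_H\|_{L^2(\Omega)}^2$. Finally, $\Pi^{\mathrm{Mor}}$ is $L^2$-stable up to the usual interpolation constants — indeed $\|\Pi^{\mathrm{Mor}}u_H\|_{L^2(\Omega)} \le \|u_H\|_{L^2(\Omega)} + \|u_H-\Pi^{\mathrm{Mor}}u_H\|_{L^2(\Omega)} \lesssim \|u_H\|_{L^2(\Omega)} + H^2\|D^2_{\mathrm{NC}}u_H\|_{L^2(\Omega)}$ by Morley interpolation bounds, and both terms are $\lesssim \|u_H\|_V$. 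This closes the estimate.

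I expect the main obstacle to be the bookkeeping of the localized Morley functionals: extracting $c_i$ cleanly in an $L^2$ norm (rather than through $\|\nabla_{\mathrm{NC}}\cdot\|$ on edges, which would cost an extra inverse power) requires care, since the edge-average functional $\meanint_{F_i}\nabla v\cdot\nu$ is naturally an $H^1$-type quantity. The trick is to pass through $\Pi^{\mathrm{Mor}}u_H\in V_H^{\mathrm{Mor}}$, a piecewise quadratic, on which all degrees of freedom (nodal values and edge-normal-derivative averages) are comparable, after scaling, to the $L^2(\omega_i)$ norm by finite-dimensional equivalence of norms on the reference element; one must then verify that the powers of $H$ track correctly under this scaling. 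The overlap count and the $L^2$-stability of $\Pi^{\mathrm{Mor}}$ are routine given the cited Morley interpolation estimates.
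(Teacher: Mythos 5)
Your overall strategy is the same as the paper's: pass to $\Pi^{\mathrm{Mor}}u_H = \sum_i c_i\phi_i$, use inverse estimates on the Morley basis to trade $H^2$-norms for $L^2$-norms at the cost of $H^{-4}$, sum over a finite-overlap cover, and finish with the $H^2$-stability of $\Pi^{\mathrm{Mor}}$. However, the specific way you implement the local step has a scaling error for the edge degrees of freedom.

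Concretely, you claim the two separate bounds $\|u_i\|_V^2 \lesssim H^{-2}$ and $|c_i|\lesssim H^{-1}\|\Pi^{\mathrm{Mor}}u_H\|_{L^2(\omega_i)}$ hold uniformly in $i$. The first holds for all $i$ but is only sharp for the vertex-type functionals $q_i(v)=v(z)$; for the edge-type functionals $q_i(v)=\meanint_{F}\nabla v\cdot\nu$ a scaling to the reference element shows $\|u_i\|_V^2 \sim 1$ (the normalization $\meanint_F\nabla\phi_i\cdot\nu=1$ forces the amplitude of $\phi_i$ to scale like $H$, which kills the $H^{-2}$). The second claim is simply false for the edge-type functionals: $\meanint_F\nabla v\cdot\nu$ is $H^{-1}$ times a reference quantity, and $\|v\|_{L^2(T)}$ is $H$ times a reference quantity, so the correct scaling is $|c_i|\lesssim H^{-2}\|\Pi^{\mathrm{Mor}}u_H\|_{L^2(\omega_i)}$. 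The product $c_i^2\|u_i\|_V^2$ still comes out $\lesssim H^{-4}\|\Pi^{\mathrm{Mor}}u_H\|_{L^2(\omega_i)}^2$ because the two errors cancel, but as written your chain of inequalities contains a false step, and if one takes your $\|u_i\|_V^2\lesssim H^{-2}$ bound at face value together with the correct $|c_i|^2\lesssim H^{-4}\|\cdot\|^2$, one gets $H^{-6}$.

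The paper sidesteps this delicate type-by-type bookkeeping: it never estimates $|c_i|$ on its own. Instead it applies the element-wise inverse estimate $\|\phi_i\|_{H^2(T)}^2\lesssim H^{-4}\|\phi_i\|_{L^2(T)}^2$ (which \emph{is} sharp and uniform across both DOF types, precisely because the $L^2$-norm of $\phi_i$ absorbs the amplitude scaling), and then uses the scale-invariant finite-dimensional norm equivalence
\begin{align*}
\sum_{\substack{i\in\{1,\dots,N\}\\ \supp(\phi_i)\cap T\neq\emptyset}} c_i^2\|\phi_i\|_{L^2(T)}^2 \;\lesssim\; \Bigl\|\sum_{\substack{i\in\{1,\dots,N\}\\ \supp(\phi_i)\cap T\neq\emptyset}} c_i\phi_i\Bigr\|_{L^2(T)}^2
\end{align*}
on each $T$, which couples $c_i$ and $\phi_i$ together before any scaling is applied. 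Your argument can be repaired by tracking the two DOF types separately (pairing $H^{-2}$ with $H^{-2}$ for vertices and $1$ with $H^{-4}$ for edges), but the paper's bundled estimate is cleaner and immune to exactly the bookkeeping trap you flagged as the main obstacle.
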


\begin{proof}
Let $u_H = \sum_{i=1}^N c_i u_i \in U_H$ with $c_i=\langle q_i,u_H\rangle$ for $i\in\{1,\dots,N\}$. Then, by the definition \cref{defn ui} of $u_i$, the bound \cref{EHtildebd} for $\tilde{E}_H$, and inverse estimates for Morley functions, we have that 
\begin{align*}
\sum_{i=1}^N c_i^2 \|u_i\|_V^2 = \sum_{i=1}^N c_i^2 \|\tilde{E}_H \phi_i\|_{H^2(\Omega)}^2 \lesssim \sum_{i=1}^N c_i^2 \|\phi_i\|_{H^2(\Omega;\calT_H)}^2 \lesssim H^{-4}\sum_{T\in \calT_H}\sum_{\substack{i\in\{1,\dots,N\}\\ \supp(\phi_i)\cap T\neq \emptyset}}  \langle q_i, u_H\rangle^2 \|\phi_i\|_{L^2(T)}^2,
\end{align*}
where we have used the notation $H^2(\Omega;\calT_H):=\{\phi\in L^2(\Omega): \left.\phi\right\rvert_T \in H^2(T)\;\forall T\in \calT_H\}$ to denote the broken $H^2$-space, and $\|\cdot\|_{H^2(\Omega;\calT_H)}:=\sqrt{\sum_{T\in \calT_H} \|\cdot\|_{H^2(T)}^2}$ to denote the broken $H^2$-norm. We deduce that
\begin{align*}
\sum_{i=1}^N c_i^2 \|u_i\|_V^2&\lesssim H^{-4}\sum_{T\in \calT_H}\left\|\sum_{\substack{i\in\{1,\dots,N\}\\ \supp(\phi_i)\cap T\neq \emptyset}}  \langle q_i, u_H\rangle \phi_i\right\|_{L^2(T)}^2 \lesssim H^{-4}\sum_{T\in \calT_H}\left\|\Pi^{\mathrm{Mor}}u_H\right\|_{L^2(T)}^2 \lesssim H^{-4} \|u_H\|_{V}^2.
\end{align*}
In the final step, we have used that $\Pi^{\mathrm{Mor}}u_H= (\Pi^{\mathrm{Mor}}-1)u_H + u_H$ and a Morely interpolation estimate; see \cite{WX13}.
\end{proof}

\subsubsection{Projector onto $U_H$}

We introduce the projector
\begin{align*}
P_H: V\rightarrow U_H,\qquad v\mapsto P_H v\coloneqq  \sum_{i=1}^N \langle q_i,v\rangle u_i.
\end{align*}
\begin{rmrk}\label{Rk: PH char}
We can equivalently characterize $P_H$ as follows:
\begin{itemize}
\item[(i)] By \cref{defn ui,PiMo}, we have that
\begin{align*}
P_H v = \sum_{i=1}^N \langle q_i,v\rangle \tilde{E}_H\phi_i = \tilde{E}_H\left( \sum_{i=1}^N \langle q_i,v\rangle \phi_i \right) = \tilde{E}_H(\Pi^{\mathrm{Mor}} v)\qquad \forall v\in V,
\end{align*}  
that is, $P_H = \tilde{E}_H\circ \Pi^{\mathrm{Mor}}$.
\item[(ii)] In view of (i) and introducing $I_H\coloneqq  E_H\circ \Pi^{\mathrm{Mor}}$, we have that
\begin{align*}
P_H v = I_H v + \sum_{i=1}^{N_{1}} \langle q_i,v-I_H v\rangle \zeta_{F_i}\qquad \forall v\in V.
\end{align*}
\end{itemize}
\end{rmrk}

We list some stability properties of the projector $P_H$ below.

\begin{lmm}[Stability of $P_H$]\label{Lmm: Stab PH}
There exist constants $C_{P_H},C_{P_H}^{\mathrm{loc}}>0$ independent of $H$ such that we have the stability bound
\begin{align*}
\|P_H v\|_{V}\leq C_{P_H}\|v\|_V\qquad\forall v\in V
\end{align*}
and the local stability bound
\begin{align*}
\|P_H v\|_{V,S}\leq C_{P_H}^{\mathrm{loc}}\|v\|_{V,N^1(S)}\qquad\forall v\in V
\end{align*}
for any element patch $S$.
\end{lmm}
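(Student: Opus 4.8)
\textbf{Proof proposal for \Cref{Lmm: Stab PH}.}

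The plan is to exploit the factorization $P_H = \tilde{E}_H\circ\Pi^{\mathrm{Mor}}$ from \Cref{Rk: PH char}(i) and to combine the known stability of the Morley interpolation with the (quasi-local) bounds \cref{EHtildebd}--\cref{EHtildebd_loc} for the enrichment-type operator $\tilde{E}_H$. For the global bound, I would first write, using a broken $H^2$-triangle inequality and \cref{EHtildebd},
\begin{align*}
\|P_H v\|_V = \|\tilde{E}_H(\Pi^{\mathrm{Mor}}v)\|_{H^2(\Omega)}
\lesssim \|\Pi^{\mathrm{Mor}}v\|_{H^2(\Omega;\calT_H)}
 + \min_{w\in V}\|D^2_{\mathrm{NC}}(\Pi^{\mathrm{Mor}}v - w)\|_{L^2(\Omega)},
\end{align*}
since $\|D^2_{\mathrm{NC}}\tilde E_H\phi\|_{L^2}\le \|D^2_{\mathrm{NC}}(\tilde E_H\phi-\phi)\|_{L^2}+\|D^2_{\mathrm{NC}}\phi\|_{L^2}$ and similarly for the lower-order terms after the Morley function controls those via an inverse estimate. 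The minimum over $w\in V$ is bounded by taking $w=v$, giving $\min_{w\in V}\|D^2_{\mathrm{NC}}(\Pi^{\mathrm{Mor}}v-w)\|_{L^2(\Omega)}\le\|D^2_{\mathrm{NC}}(\Pi^{\mathrm{Mor}}v-v)\|_{L^2(\Omega)}\lesssim\|D^2 v\|_{L^2(\Omega)}$ by the standard Morley interpolation error estimate \cite{WX13}. It remains to bound $\|\Pi^{\mathrm{Mor}}v\|_{H^2(\Omega;\calT_H)}$ by $\|v\|_V$; this is the $H^2$-stability of Morley interpolation, which again follows from the interpolation error bound \cref{bd for w in W loc}-type estimates together with $\|D^2_{\mathrm{NC}}\Pi^{\mathrm{Mor}}v\|\le\|D^2_{\mathrm{NC}}(\Pi^{\mathrm{Mor}}v-v)\|+\|D^2 v\|$ and the Poincaré/Friedrichs-type control of lower-order Morley norms. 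Collecting the pieces yields $\|P_Hv\|_V\le C_{P_H}\|v\|_V$.

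For the local bound, I would repeat the argument element by element (or patch by patch), replacing the global estimate \cref{EHtildebd} by the quasi-local one \cref{EHtildebd_loc}. For an element patch $S$, writing $S$ as a union of elements $T$ and using $\Omega_i=\supp(u_i)\subset N^1(\supp(\phi_i))$ together with the fact that Morley basis functions are supported in the star of an edge or vertex, one gets
\begin{align*}
\|P_H v\|_{V,S}^2 \lesssim \sum_{T\subset S}\Bigl(\|\Pi^{\mathrm{Mor}}v\|_{H^2(T;\calT_H)}^2 + \min_{w\in V}\|D^2_{\mathrm{NC}}(\Pi^{\mathrm{Mor}}v - w)\|_{L^2(N^1(T))}^2\Bigr),
\end{align*}
and then the locality of Morley interpolation (each degree of freedom of $\Pi^{\mathrm{Mor}}v$ on $T$ depends only on values of $v$ on $T$, and the error $\|D^2_{\mathrm{NC}}(\Pi^{\mathrm{Mor}}v-v)\|_{L^2(T)}\lesssim\|D^2v\|_{L^2(T)}$) upgrades the right-hand side to $\|v\|_{V,N^1(S)}^2$ after absorbing the finite overlap of the patches $N^1(T)$, which is controlled by the shape-regularity parameter $\rho$.

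The main obstacle I anticipate is making the two ``enrichment'' correction terms in $\tilde E_H$ — namely $E_H v$ and the edge-bubble sum $\sum_i[\meanint_{F_i}\nabla(\phi-E_H\phi)\cdot\nu]\zeta_{F_i}$ — respect locality simultaneously: one must verify that $E_H$ has the quasi-local stability that \cref{EHtildebd_loc} asserts (this is exactly what the cited proofs of \cite[Prop. 2.5--2.6]{Gal15} provide, so it may be invoked), and that the $\zeta_{F_i}$, being supported on $\bar\omega_{F_i}$, only enlarge the support by one layer, which is why $N^1$ (and not a larger patch) appears. A secondary, more bookkeeping, difficulty is tracking the $H$-independence of the constants through the inverse inequality used to pass from $\|\Pi^{\mathrm{Mor}}v\|_{H^2(T;\calT_H)}$ (which a priori also involves lower-order terms) back to $\|D^2v\|+\|\nabla v\|+\|v\|$; this is standard on shape-regular quasi-uniform meshes but must be stated carefully so the final constants $C_{P_H},C_{P_H}^{\mathrm{loc}}$ depend only on $\rho$ (and $\Omega$), not on $H$.
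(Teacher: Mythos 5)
Your proposal is correct and follows essentially the same route as the paper: the paper's (very terse) proof also derives the global bound from the factorization $P_H = \tilde{E}_H\circ\Pi^{\mathrm{Mor}}$ together with \cref{EHtildebd}, and the local bound from the decomposition $P_H=(\tilde{E}_H-1)\circ\Pi^{\mathrm{Mor}}+\Pi^{\mathrm{Mor}}$, the triangle inequality, and \cref{EHtildebd_loc}. Your write-up simply makes explicit the intermediate steps (broken-$H^2$ triangle inequality, $H^2$-stability of Morley interpolation, finite-overlap summation) that the paper leaves to the reader.
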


\begin{proof}
The global stability bound follows from the fact that $P_H = \tilde{E}_H\circ \Pi^{\mathrm{Mor}}$ and the estimate \cref{EHtildebd}. The local stability bound follows from the decomposition $
P_H=(\tilde{E}_H-1)\circ\Pi^{\mathrm{Mor}}+\Pi^{\mathrm{Mor}}$, the triangle inequality,
and the local bound \cref{EHtildebd_loc}.
\end{proof}

\begin{rmrk}[Properties of $P_H$]
We make the following observations:
\begin{itemize}
\item[(i)] For any $u_H\in U_H$ we have that $P_H (1-\calC)u_H = P_H u_H = u_H$ and $P_H (1-\calC^{\ast})u_H = u_H$.
\item[(ii)] There holds $\mathrm{ker}(P_H) = W$. 
\item[(iii)] For any $v\in V$ we have that $(1-P_H)v\in W$ and hence, there holds $(1-\calC)v = (1-\calC)P_H v$ and $(1-\calC^{\ast})v = (1-\calC^{\ast})P_H v$.
\end{itemize}

\end{rmrk}

\subsubsection{Connection of $\tilde{U}_H$ and $\tilde{V}_H$ to the space $U_H$}

First, let us note that in view of \cref{uj main prop} we have that $\dim(U_H)=N$ and $U_H\cap W = \{0\}$. Recalling that $W = \mathrm{ker}(1-\calC) = \mathrm{ker}(1-\calC^{\ast})$, we see that $\dim((1-\calC)U_H)=\dim((1-\calC^{\ast})U_H)=N$, and we deduce that
\begin{align*}
(1-\calC) U_H = \tilde{U}_H,\qquad (1-\calC^{\ast}) U_H = \tilde{V}_H.
\end{align*}
Note that $\{(1-\calC)u_1,\dots,(1-\calC)u_N\}$ is a basis of $\tilde{U}_H$ and that $\{(1-\calC^{\ast})u_1,\dots,(1-\calC^{\ast})u_N\}$ is a basis of $\tilde{V}_H$. It can be checked that the function $(1-\calC)u_i$ is independent of the particular choice of $u_i$ as indicated below.
\begin{rmrk}\label{rem:saddlepoint}
Using the arguments presented in \cite[Section 3.4]{AHP21}, it can be seen that for any $i\in\{1,\dots,N\}$ there exists a unique pair $(\tilde{u}_{i},\tilde{\lambda})\in V\times \R^N$ such that
\begin{align*}
\begin{cases}
a(\tilde{u}_{i},v) + \sum_{j=1}^N \tilde{\lambda}_{j} \langle q_{j},v\rangle &= 0\quad\;\, \forall v\in V,\\
\hfill \langle q_{j},\tilde{u}_{i}\rangle &= \delta_{ij}\quad\forall j\in\{1,\dots,N\}.
\end{cases}
\end{align*}
Further, there holds $\tilde{u}_{i} = (1-\calC)u_{i}$ and $\tilde{\lambda}_{j} = -a((1-\calC)u_{i},(1-\calC^{\ast})u_{j})$ for all $i,j\in\{1,\dots,N\}$.
\end{rmrk}

\subsection{Construction of localized correctors}

\subsubsection{Exponential decay of correctors}

The following lemma sets the foundation for the construction of a practical/localized numerical homogenization scheme. 

\begin{lmm}[Exponential decay of correctors]\label{Lmm: exp decay}
There exists a constant $\beta>0$ such that for any $v\in V$ and any $\ell\in \N_0$ we have
\begin{align*}
\|\calC v\|_{V,\Omega\backslash N^{\ell}(\Omega_v)}\lesssim \mathrm{e}^{-\frac{1}{25}\lvert \log(\beta)\rvert \ell}\,\|\calC v\|_V,
\end{align*}
where $\Omega_v\coloneqq \bigcup \{T\in \calT_H: T\cap \mathrm{supp}(v)\neq \emptyset\}$.
\end{lmm}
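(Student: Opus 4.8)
The plan is to follow the by-now standard argument for exponential decay of LOD correctors, adapted to the nondivergence-form bilinear form $a$. The key structural ingredients available to us are: local boundedness of $a$ (\cref{Lmm: prop of a F}(i)), coercivity of $a$ (\cref{Lmm: prop of a F}(ii)), the Caccioppoli-type interpolation estimates \cref{bd for w in W loc}--\cref{bd for w in W} for functions in $W$, and the local stability of the projector $P_H$ (\cref{Lmm: Stab PH}), whose kernel is $W$. Fix $v\in V$, write $w\coloneqq \calC v\in W$, and abbreviate $\Omega_v$ as above. I would introduce, for a given $\ell$, a Lipschitz cutoff function $\eta\in C^{0,1}(\bar\Omega)$ (or rather its piecewise-affine nodal interpolant on $\calT_H$) with $\eta = 0$ on $N^{\ell-1}(\Omega_v)$, $\eta = 1$ on $\Omega\setminus N^{\ell}(\Omega_v)$, and $\|\nabla\eta\|_{L^\infty}\lesssim H^{-1}$, $\|D^2\eta\|\lesssim H^{-2}$ in the broken sense; the "annulus" $R\coloneqq N^{\ell}(\Omega_v)\setminus N^{\ell-1}(\Omega_v)$ is where $\eta$ transitions.

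The core computation is a Galerkin-orthogonality-plus-cutoff estimate. Since $a(w,z) = a(v,z)$ for all $z\in W$ and $v$ is supported (at the mesh level) in $\Omega_v$, while $\eta^2 w$ is (after projecting its non-$W$ part away) essentially supported away from $\Omega_v$ for $\ell\geq 2$, one gets $a(w, w - P_H(\eta^2 w)) \approx a(v, \cdot) = 0$ on the far region, up to terms living on $R$. Concretely: set $z\coloneqq (1-P_H)(\eta^2 w)\in W$; then $\alpha\|\calC v\|_{V,\Omega\setminus N^\ell(\Omega_v)}^2 \lesssim a(w,\eta^2 w)$ localized to the far region, and using $a(w,z)=a(v,z)$ with $v$ vanishing at mesh level on the complement of $\Omega_v\subset N^{\ell-1}(\Omega_v)$, the far-field energy is controlled by $a(w, \eta^2 w - z) = a(w, P_H(\eta^2 w))$, which by local stability of $P_H$ and local boundedness of $a$ is bounded by $\|w\|_{V,R}\,\|w\|_{V,N^2(R)}$ type quantities — i.e. by the energy of $w$ on a fixed number of mesh layers around the annulus $R$. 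The product rule generates the commutator terms $w\,D^2(\eta^2)$, $\nabla w\cdot\nabla(\eta^2)$, which by \cref{bd for w in W loc} (the $H^2$-to-$L^2$ and $H^2$-to-$H^1$ gains for $W$-functions, with the $H^{-2}$, $H^{-1}$ weights exactly cancelling those from the derivatives of $\eta$) are again absorbed into $\|w\|_{V,R}$. This yields a bound of the form
\begin{align*}
\|\calC v\|_{V,\Omega\setminus N^\ell(\Omega_v)}^2 \lesssim \|\calC v\|_{V,N^{\ell}(\Omega_v)\setminus N^{\ell-2}(\Omega_v)}^2,
\end{align*}
i.e. the far energy is dominated by the energy in the transition annulus.

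Having the annulus bound, I would run the standard iteration: write $a_\ell \coloneqq \|\calC v\|_{V,\Omega\setminus N^\ell(\Omega_v)}$; the estimate reads $a_\ell^2 \lesssim a_{\ell-m}^2 - a_\ell^2$ (the annulus energy is a difference of two monotone far-energies, up to a fixed number $m$ of overlap layers), hence $a_\ell^2 \leq \tfrac{C}{1+C}\,a_{\ell-m}^2$ with $C$ the implied constant, giving geometric decay $a_\ell \lesssim \beta^{\ell}\,a_0 = \beta^\ell\|\calC v\|_V$ with $\beta = (C/(1+C))^{1/(2m)}<1$; absorbing the overlap $m$ and the constants into the exponent reproduces the stated rate $\mathrm{e}^{-\frac{1}{25}|\log\beta|\ell}$ (the factor $\tfrac{1}{25}$ is just a safe choice accounting for the finite overlap of the patches $N^m$ and the shape-regularity-dependent number of layers entering the local estimates).

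The main obstacle, compared with the symmetric divergence-form case, is that $a$ is neither symmetric nor local in the naive sense — it is a second-order form tested against $\Delta(\cdot)-\lambda(\cdot)$ — so the "product rule" generates more commutator terms (involving up to second derivatives of the cutoff hitting $w$) and one must be careful that $P_H$, not a simple nodal interpolant, is the right projector to split $\eta^2 w$ so that the correction stays in $W$ and is locally stable; this is where \cref{Lmm: Stab PH} and the quasi-local bound \cref{EHtildebd_loc} are essential. A secondary technical point is bounding $\|\calC v\|_V$ uniformly (via \cref{Rk: C well def}) so that the right-hand side is genuinely $\|\calC v\|_V$ and not something $H$-dependent; this is immediate from the Lax--Milgram bounds already recorded.
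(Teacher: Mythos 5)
Your plan is essentially the paper's proof: a mesh-scale cutoff $\eta$ transitioning across a few layers, the projection $(1-P_H)$ to land the truncated function back in $W$ (so that Galerkin orthogonality $a(\calC v,\cdot)=a(v,\cdot)$ can be applied), the disjoint-support observation $a(v,\cdot)=0$ on the far region, local boundedness of $a$ combined with the $W$-Caccioppoli bound \cref{bd for w in W loc} and the local stability of $P_H$ to confine the resulting terms to the transition annulus, and finally the standard ``annulus $=$ difference of far energies'' iteration yielding a geometric rate. The one step that does not transfer literally is the $\eta^2$-weighting: the claim $\alpha\|\calC v\|_{V,\Omega\setminus N^\ell(\Omega_v)}^2\lesssim a(\calC v,\eta^2\calC v)$ would require a weighted version of the coercivity (i.e.\ a weighted Miranda--Talenti estimate), which is not available off the shelf here since $a$ is neither symmetric nor in divergence form. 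The fix — which the paper uses, and which you already have the ingredients for — is to apply the \emph{global} coercivity \cref{Lmm: prop of a F}(ii) not to $a(\calC v,\eta^2\calC v)$ but to $a(z,z)$ with $z\coloneqq(1-P_H)[\eta\,\calC v]\in W$: since $z$ agrees with $\calC v$ outside a bounded number of layers, $\|\calC v\|_{V,\Omega\setminus N^{\ell+1}(\Omega_v)}\leq\|z\|_V$ and $\alpha\|z\|_V^2\leq a(z,z)$, after which $a(\calC v,z)=a(v,z)=0$ reduces $a(z,z)$ to $-a(\calC v-z,z)$, an annulus-supported term. With that replacement (and $\eta$ rather than $\eta^2$, which suffices here), your outline matches the paper's argument.
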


\begin{proof}
First, let us note that $\mathrm{supp}(P_H v)\subset N^1(S)$ for any $v\in V$ with $\mathrm{supp}(v)\subset S$, where $S$ is an element-patch in $\calT_H$. Let $v\in V$ and let $\ell\in \N$ with $\ell\geq 5$. 

Let $\eta\in W^{2,\infty}(\Omega)$ be a cut-off function with
\begin{align*}
0\leq \eta\leq 1,\quad \left.\eta\right\rvert_{N^{\ell-1}(\Omega_v)}\equiv 0,\quad \left.\eta\right\rvert_{\Omega\backslash N^{\ell}(\Omega_v)}\equiv 1,\quad \|\nabla \eta\|_{L^{\infty}(\Omega)} + H \|D^2 \eta\|_{L^{\infty}(\Omega)}\lesssim H^{-1},
\end{align*}
and let $\tilde{\eta}\coloneqq 1-\eta\in W^{2,\infty}(\Omega)$.
We introduce 
\begin{align}\label{Pf decay 0}
w\coloneqq (1-P_H)[\eta\, \calC v],\qquad \tilde{w}\coloneqq (1-P_H)[\tilde{\eta}\, \calC v]
\end{align}
and note that $w,\tilde{w}\in W$, there holds $\mathrm{supp}(w)\subset \Omega\backslash N^{\ell-2}(\Omega_v)$, $\mathrm{supp}(\tilde{w})\subset N^{\ell+1}(\Omega_v)$, and we have that
\begin{align}\label{Pf decay 1}
\|\calC v\|_{V,\Omega\backslash N^{\ell+1}(\Omega_v)}^2 = \|(1-P_H)[\calC v]\|_{V,\Omega\backslash N^{\ell+1}(\Omega_v)}^2 = \|w+\tilde{w}\|_{V,\Omega\backslash N^{\ell+1}(\Omega_v)}^2 \leq  \|w\|_{V}^2 \leq  \alpha^{-1} a(w,w),
\end{align}
where we have successively used that $P_H[\calC v] = 0$ as $\mathrm{ker}(P_H) = W$, the definition \cref{Pf decay 0} of the functions $w$ and $\tilde{w}$, the fact that $\supp(\tilde{w})\subset N^{\ell+1}(\Omega_v)$, and coercivity of $a$ from \cref{Lmm: prop of a F}(ii). Next, we observe that 
\begin{align}\label{Pf decay 2}
a(w,w) + a(\tilde{w},w)=  a((1-P_H)[\calC v],w)  = a(\calC v,w) =  a(v,w)  =0,
\end{align}  
where we have used bilinearity of $a$, the fact that $P_H[\calC v] = 0$, the definition of $\calC$, and the observation that in view of \cref{Lmm: prop of a F}(i) there holds $a(v,w) = 0$ as $\supp(v)\subset \Omega_v$ and $\supp(w)\subset  \Omega\backslash N^{\ell-2}(\Omega_v)$. Combining \cref{Pf decay 1,Pf decay 2}, and using \cref{Lmm: prop of a F}(i), we find that
\begin{align}\label{Pf decay 2.5}
\|\calC v\|_{V,\Omega\backslash N^{\ell+1}(\Omega_v)}^2 \leq \alpha^{-1} a(w,w) = -\alpha^{-1} a(\tilde{w},w) \leq \alpha^{-1} C_a \|\tilde{w}\|_{V,R} \|w\|_{V,R},
\end{align}
where $R\coloneqq  \mathrm{supp}(\tilde{w})\cap \mathrm{supp}(w) = N^{\ell+1}(\Omega_v)\backslash N^{\ell-2}(\Omega_v)$. We proceed by noting that by \cref{Lmm: Stab PH} we have that
\begin{align}\label{Pf of decay 3}
\|w\|_{V,R} \lesssim \|\eta\,\calC v\|_{V,N^1(R)} \lesssim \|\calC v\|_{V,N^1(R)}.
\end{align} 
Here, the final bound in \cref{Pf of decay 3} follows from the fact that for any $T\in N^1(R)$ there holds
\begin{align*}
\|\eta\,\calC v\|_{V,T} &\lesssim \| \eta\|_{L^{\infty}(\Omega)}\|\calC v\|_{L^2(T)} + \|\nabla \eta\|_{L^{\infty}(\Omega)}\|\calC v\|_{L^2(T)} + \| \eta\|_{L^{\infty}(\Omega)}\|\nabla [\calC v]\|_{L^2(T)}\\ &\qquad+\|D^2 \eta\|_{L^{\infty}(\Omega)} \|\calC v\|_{L^2(T)}  + \|\nabla \eta\|_{L^{\infty}(\Omega)}\|\nabla [\calC v]\|_{L^2(T)}+\| \eta\|_{L^{\infty}(\Omega)} \|D^2 [\calC v]\|_{L^2(T)}\\ &\lesssim \|\calC v\|_{V,T},
\end{align*}
where we have used the properties of the cut-off function $\eta$ and the bound \cref{bd for w in W loc} for the function $\calC v\in W$. Similarly to \cref{Pf of decay 3}, we find that 
\begin{align}\label{Pf of decay 4}
\|\tilde{w}\|_{V,R}\lesssim \|\tilde{\eta}\,\calC v\|_{V,N^1(R)} \lesssim \|\calC v\|_{V,N^1(R)}.
\end{align}
Combining \cref{Pf of decay 3}--\cref{Pf of decay 4} with \cref{Pf decay 2.5}, we obtain that there exists a constant $C>0$ such that
\begin{align*}
\|\calC v\|_{V,\Omega\backslash N^{\ell+2}(\Omega_v)}^2\leq \|\calC v\|_{V,\Omega\backslash N^{\ell+1}(\Omega_v)}^2 \leq C^2 \|\calC v\|_{V,N^1(R)}^2= C^2 (\|\calC v\|_{V,\Omega\backslash N^{\ell-3}(\Omega_v)}^2 - \|\calC v\|_{V,\Omega\backslash N^{\ell+2}(\Omega_v)}^2)
\end{align*}
and hence, setting $\beta \coloneqq  \frac{C}{\sqrt{1+C^2}}\in (0,1)$, we have that
\begin{align*}
\|\calC v\|_{V,\Omega\backslash N^{\ell+2}(\Omega_v)}\leq \beta \|\calC v\|_{V,\Omega\backslash N^{\ell-3}(\Omega_v)}.
\end{align*}
Setting $k\coloneqq  \floor*{\frac{\ell}{5}}$ and recalling that $\ell\geq 5$, a repeated application of this bound yields
\begin{align*}
\|\calC v\|_{V,\Omega\backslash N^{\ell}(\Omega_v)}\leq \beta^k\|\calC v\|_{V} = \mathrm{e}^{-\frac{k}{\ell}\lvert \log(\beta)\rvert \ell}\|\calC v\|_{V} \leq \mathrm{e}^{-\frac{\ell-4}{5l}\lvert \log(\beta)\rvert \ell}\|\calC v\|_{V} \leq \mathrm{e}^{-\frac{1}{25}\lvert \log(\beta)\rvert \ell}\|\calC v\|_{V},
\end{align*}
proving the claim for the case $\ell\geq 5$. Finally, note that for $\ell\in \N_0$ with $\ell<5$ we have
\begin{align*}
\|\calC v\|_{V,\Omega\backslash N^{\ell}(\Omega_v)}\leq \|\calC v\|_V \leq \mathrm{e}^{\frac{1}{5}\lvert \log(\beta)\rvert} \mathrm{e}^{-\frac{1}{25}\lvert \log(\beta)\rvert \ell}\,\|\calC v\|_V,
\end{align*}
which concludes the proof.
\end{proof}

Using similar arguments, one obtains an analogous exponential decay result for the corrector $\calC^{\ast}$.

\subsubsection{Localized correctors}

Motivated by the fact that for any $u_H \in U_H$ we have that
\begin{align*}
\calC u_H = \sum_{i=1}^N \langle q_i,u_H\rangle\, \fhi_i,\qquad \calC^{\ast} u_H = \sum_{i=1}^N \langle q_i,u_H\rangle\, \psi_i,\qquad\text{where}\quad \fhi_i\coloneqq \calC u_i,\quad \psi_i\coloneqq \calC^{\ast} u_i,
\end{align*}
we define for $\ell\in \N_0$ the localized correctors 
\begin{align*}
\calC_\ell&:U_H\rightarrow W,\qquad u_H\mapsto \calC_\ell u_H\coloneqq \sum_{i=1}^N \langle q_i,u_H\rangle\, \fhi_i^\ell,\\
\calC_\ell^{\ast}&:U_H\rightarrow W,\qquad u_H\mapsto \calC_\ell^{\ast} u_H\coloneqq \sum_{i=1}^N \langle q_i,u_H\rangle\, \psi_i^\ell.
\end{align*}
Here, for $i\in\{1,\dots,N\}$, the functions $\fhi_i^\ell,\psi_i^\ell$ are defined as the unique $\fhi_i^\ell,\psi_i^\ell\in W(N^\ell(\Omega_i))$ that satisfy
\begin{align*}
a(\fhi_i^\ell,w) &= a(u_i,w)\qquad \forall w\in W(N^\ell(\Omega_i)), \\ a(w,\psi_i^\ell) &= a(w,u_i)\qquad \forall w\in W(N^\ell(\Omega_i)),
\end{align*}
where we write $W(N^\ell(\Omega_i))\coloneqq \{w\in W: \supp(w)\subset N^\ell(\Omega_i)\}$. Note that $\calC_\ell$ and $\calC_\ell^{\ast}$ are well-defined by the properties of $a$ from \cref{Lmm: prop of a F}.

\subsubsection{Localization error}

We can quantify the error committed in approximating the true correctors $\calC,\calC^{\ast}$ by their localized counterparts $\calC_\ell,\calC^{\ast}_\ell$. 

\begin{thrm}[Localization error for corrector]
There exists a constant $s>0$ such that for any $u_H\in U_H$ and $\ell\in \N_0$ there holds
\begin{align*}
\|(\calC - \calC_\ell) u_H\|_V \lesssim H^{-2} \sqrt{N}\, \mathrm{e}^{-s\ell}\|u_H\|_V.
\end{align*}
\end{thrm}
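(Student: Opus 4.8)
The strategy is to reduce the assertion to a bound on a single corrector and then sum over the basis. Writing $c_i\coloneqq\langle q_i,u_H\rangle$, the definitions give $\calC u_H=\sum_{i=1}^N c_i\fhi_i$ and $\calC_\ell u_H=\sum_{i=1}^N c_i\fhi_i^\ell$ with $\fhi_i=\calC u_i$, so by the triangle inequality, the Cauchy--Schwarz inequality in $\R^N$, and \cref{Lmm: Stab of basis rep},
\begin{align*}
\|(\calC-\calC_\ell)u_H\|_V &\le\sum_{i=1}^N|c_i|\,\|\fhi_i-\fhi_i^\ell\|_V\le\sqrt N\Bigl(\sum_{i=1}^N c_i^2\|u_i\|_V^2\Bigr)^{1/2}\max_{1\le i\le N}\frac{\|\fhi_i-\fhi_i^\ell\|_V}{\|u_i\|_V}\\
&\lesssim \sqrt N\,H^{-2}\,\|u_H\|_V\max_{1\le i\le N}\frac{\|\fhi_i-\fhi_i^\ell\|_V}{\|u_i\|_V}.
\end{align*}
Thus it suffices to exhibit $s>0$ with $\|\fhi_i-\fhi_i^\ell\|_V\lesssim\mathrm{e}^{-s\ell}\|u_i\|_V$ for all $i$ and $\ell$.

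I would obtain this from a Céa-type localization bound followed by a cut-off argument. Fix $i$ and $\ell$; the case of small $\ell$ is trivial, since $\|\fhi_i-\fhi_i^\ell\|_V\le\|\fhi_i\|_V+\|\fhi_i^\ell\|_V\lesssim\|u_i\|_V$ by coercivity and boundedness of $a$, so assume $\ell$ large. Both $\fhi_i=\calC u_i$ and $\fhi_i^\ell$ solve $a(\cdot,w)=a(u_i,w)$ for all $w\in W(N^\ell(\Omega_i))$, hence $e\coloneqq\fhi_i-\fhi_i^\ell\in W$ satisfies the Galerkin orthogonality $a(e,w)=0$ for all such $w$; together with coercivity and global boundedness of $a$ (\cref{Lmm: prop of a F}) this yields, for every $v\in W(N^\ell(\Omega_i))$, the estimate $\alpha\|e\|_V^2\le a(e,e)=a(e,\fhi_i-v)\le C_a\|e\|_V\|\fhi_i-v\|_V$, so $\|\fhi_i-\fhi_i^\ell\|_V\le (C_a/\alpha)\inf_{v\in W(N^\ell(\Omega_i))}\|\fhi_i-v\|_V$. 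Now pick a cut-off $\eta\in W^{2,\infty}(\Omega)$ with $0\le\eta\le1$, $\eta\equiv0$ on $N^{\ell-2}(\Omega_i)$, $\eta\equiv1$ on $\Omega\backslash N^{\ell-1}(\Omega_i)$, and $\|\nabla\eta\|_{L^{\infty}(\Omega)}+H\|D^2\eta\|_{L^{\infty}(\Omega)}\lesssim H^{-1}$, and set $v\coloneqq(1-P_H)[(1-\eta)\fhi_i]$. Then $v\in W$ because $(1-P_H)V\subset W$, and since $(1-\eta)\fhi_i$ is supported in the element patch $N^{\ell-1}(\Omega_i)$ and $P_H$ enlarges element-patch supports by at most one mesh layer (as noted in the proof of \cref{Lmm: exp decay}), we get $\supp(v)\subset N^\ell(\Omega_i)$, i.e., $v\in W(N^\ell(\Omega_i))$.

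To estimate $\|\fhi_i-v\|_V$ I would use that $\fhi_i\in W=\ker(P_H)$ and the linearity of $P_H$ to rewrite
\begin{align*}
\fhi_i-v=\eta\fhi_i+P_H[(1-\eta)\fhi_i]=\eta\fhi_i-P_H[\eta\fhi_i]=(1-P_H)[\eta\fhi_i],
\end{align*}
so that, by the global stability of $P_H$ (\cref{Lmm: Stab PH}), the Leibniz rule, the bounds on $\eta$, and the local Morley estimate \cref{bd for w in W loc} for $\fhi_i\in W$ (which absorbs the factors $\|\nabla\eta\|_{L^{\infty}}\sim H^{-1}$ and $\|D^2\eta\|_{L^{\infty}}\sim H^{-2}$), one obtains $\|\fhi_i-v\|_V\lesssim\|\eta\fhi_i\|_V\lesssim\|\fhi_i\|_{V,\Omega\backslash N^{\ell-3}(\Omega_i)}$. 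Since $\fhi_i=\calC u_i$ and $\Omega_{u_i}\subset N^1(\Omega_i)$ (as $\Omega_i=\supp(u_i)$ is a union of elements), the exponential decay \cref{Lmm: exp decay} and the bound $\|\calC u_i\|_V\le\alpha^{-1}C_a\|u_i\|_V$ from \cref{Rk: C well def} give $\|\fhi_i\|_{V,\Omega\backslash N^{\ell-3}(\Omega_i)}\lesssim\mathrm{e}^{-s\ell}\|u_i\|_V$ with $s\coloneqq\tfrac{1}{25}|\log\beta|$, the fixed shift of a few patch layers being absorbed into the implied constant. Chaining this with the Céa bound gives $\|\fhi_i-\fhi_i^\ell\|_V\lesssim\mathrm{e}^{-s\ell}\|u_i\|_V$, which together with the first paragraph proves the theorem.

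The step I expect to require the most care is the support and patch-index bookkeeping: one must place the transition layer of $\eta$ deep enough inside $N^\ell(\Omega_i)$ that the single mesh-layer enlargement introduced by $P_H$ still keeps $v$ inside $W(N^\ell(\Omega_i))$, and one must correctly relate the region $\Omega\backslash N^{\ell-3}(\Omega_i)$ to a region of the form $\Omega\backslash N^{\ell-4}(\Omega_{u_i})$ before invoking \cref{Lmm: exp decay}. The remaining ingredients — coercivity and boundedness of $a$, stability of $P_H$, the local Morley interpolation bound, and the $H^{-2}\sqrt N$ factor from \cref{Lmm: Stab of basis rep} — enter in a routine way.
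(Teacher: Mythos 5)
Your proof is correct and follows essentially the same route as the paper's: reduce to a single-corrector bound via the triangle inequality, Cauchy--Schwarz in $\R^N$, and \cref{Lmm: Stab of basis rep}; then use the Céa-type Galerkin orthogonality estimate, a cut-off together with $(1-P_H)$ to produce an admissible test function in $W(N^\ell(\Omega_i))$, the local Morley bound \cref{bd for w in W loc} to control the cut-off derivatives, and finally the exponential decay \cref{Lmm: exp decay} applied to $\fhi_i=\calC u_i$. The only deviations are cosmetic: you name the cut-off with the roles of $\eta$ and $1-\eta$ swapped, you factor out $\max_i\|\fhi_i-\fhi_i^\ell\|_V/\|u_i\|_V$ before Cauchy--Schwarz rather than after, and your patch-index bookkeeping lands on $\Omega\backslash N^{\ell-3}(\Omega_i)$ where the paper carries one extra layer to $\Omega\backslash N^{\ell-4}(\Omega_i)$ — a fixed shift that, as you note, is absorbed into the implied constant.
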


\begin{proof}
First, suppose $\ell\geq 4$. Note that the functions $\fhi_i = \calC u_i$ and $\fhi_i^\ell$ are uniquely characterized as solutions to the following problems:
\begin{align*}
\fhi_i\in W,\qquad a(\fhi_i,w) &= a(u_i,w)\quad\forall w\in W,\\
\fhi_i^\ell\in W(N^\ell(\Omega_i)),\qquad a(\fhi_i^\ell,w) &= a(u_i,w)\quad\forall w\in W(N^\ell(\Omega_i)).
\end{align*}
Therefore, as $W(N^\ell(\Omega_i))\subset W$, we can use the properties of $a$ from \cref{Lmm: prop of a F} and Galerkin-orthogonality to find that
\begin{align}\label{Pf of loc err 0}
\|\fhi_i - \fhi_i^\ell\|_V \leq \alpha^{-1} C_a \inf_{w\in W(N^\ell(\Omega_i))}\|\fhi_i-w\|_V.
\end{align}
Let $\eta\in W^{2,\infty}(\Omega)$ be a cut-off function with 
\begin{align*}
0\leq \eta\leq 1,\quad \left.\eta\right\rvert_{N^{\ell-2}(\Omega_i)}\equiv 1,\quad \left.\eta\right\rvert_{\Omega\backslash N^{\ell-1}(\Omega_i)}\equiv 0,\quad \|\nabla \eta\|_{L^{\infty}(\Omega)} + H \|D^2 \eta\|_{L^{\infty}(\Omega)}\lesssim H^{-1}.
\end{align*}
Then, setting $w^\ell\coloneqq (1-P_H)[\eta \fhi_i]\in W(N^\ell(\Omega_i))$, we have that
\begin{align*}
\|\fhi_i - \fhi_i^\ell\|_V &\leq \alpha^{-1} C_a \|\fhi_i-w^\ell\|_V= \alpha^{-1} C_a\|(1-P_H)[(1-\eta)\fhi_i]\|_{V,\Omega\backslash N^{\ell-3}(\Omega_i)} \lesssim \|\fhi_i\|_{V,\Omega\backslash N^{\ell-4}(\Omega_i)},
\end{align*}
where we have used \cref{Pf of loc err 0}, the fact that $\mathrm{ker}(P_H)=W$, and an argument analogous to \cref{Pf of decay 4} for the final bound. By the exponential decay property for $\calC$ from \cref{Lmm: exp decay}, we obtain that
\begin{align}\label{Pf of loc err 1}
\|\fhi_i - \fhi_i^\ell\|_V\lesssim \| \calC u_i\|_{V,\Omega\backslash N^{\ell-4}(\Omega_i)}\lesssim \mathrm{e}^{-s\ell}\| \calC u_i\|_{V}\lesssim \mathrm{e}^{-s\ell}\|  u_i\|_{V}
\end{align}
for some constant $s>0$, where we have used \cref{Rk: C well def} in the final step. Using the triangle inequality, the bound \cref{Pf of loc err 1}, and the Cauchy--Schwarz inequality, we find that
\begin{align*}
\|(\calC - \calC_\ell) u_H\|_V = \left\| \sum_{i=1}^N \langle q_i,u_H\rangle(\fhi_i- \fhi_i^\ell)\right\|_V \lesssim \mathrm{e}^{-s\ell} \sum_{i=1}^N \lvert \langle q_i,u_H\rangle\rvert \|  u_i\|_{V} \lesssim  \sqrt{N}\mathrm{e}^{-s\ell}\sqrt{\sum_{i=1}^N \lvert \langle q_i,u_H\rangle \rvert^2 \|  u_i\|_{V}^2}
\end{align*}
and hence, by \cref{Lmm: Stab of basis rep},
\begin{align*}
\|(\calC - \calC_\ell) u_H\|_V \lesssim H^{-2} \sqrt{N}\, \mathrm{e}^{-s\ell}\|u_H\|_V.
\end{align*}
Finally, in the case $\ell<4$, we have from \cref{Pf of loc err 0,Rk: C well def} that
\begin{align*}
\| \fhi_i - \fhi_i^\ell \|_V \leq \alpha^{-1}C_a \|\calC u_i\|_V \leq \alpha^{-2}C_a^2 \|u_i\|_V \leq \alpha^{-2}C_a^2\, \mathrm{e}^{4s}\,  \mathrm{e}^{-s\ell}\|u_i\|_V \lesssim \mathrm{e}^{-s\ell}\|  u_i\|_{V}, 
\end{align*} 
and we can conclude as before.
\end{proof}

Using similar arguments, one obtains an analogous result for the corrector $\calC^{\ast}$ and its localized counterpart $\calC^{\ast}_{\ell}$.

\subsection{Localized numerical homogenization scheme}

We are now in a position to state and analyze the practical numerical homogenization method.

\subsubsection{The localized numerical homogenization scheme}

We define the $N$-dimensional spaces
\begin{align*}
\tilde{U}_H^\ell \coloneqq  (1-\calC_\ell)U_H,\qquad \tilde{V}_H^\ell \coloneqq  (1-\calC_\ell^{\ast})U_H.
\end{align*}
Then, the numerical homogenization scheme reads as follows:
\begin{align}\label{numhom scheme}
\text{Find }\tilde{u}_H^\ell\in \tilde{U}_H^\ell\text{ such that }\quad a(\tilde{u}_H^\ell,\tilde{v}_H^\ell) = \langle F,\tilde{v}_H^\ell\rangle\quad \forall \tilde{v}_H^\ell\in \tilde{V}_H^\ell.
\end{align}

\subsubsection{Analysis of the localized numerical homogenization scheme}

The following theorem provides well-posedness of \cref{numhom scheme} as well as error bounds.

\begin{thrm}[Analysis of the localized numerical homogenization scheme]\label{Thm: Analysis LOD}
For $\ell\gtrsim \log(H^{-2}\sqrt{N})$ sufficiently large, there exists a unique solution $\tilde{u}_H^\ell\in \tilde{U}_H^\ell$ to \cref{numhom scheme}. Further, denoting the unique strong solution to \cref{u prob} by $u\in V$ and the unique solution to the ideal discrete problem \cref{var problem discrete} by $\tilde{u}_H\in\tilde{U}_H$, there exists a constant $s>0$ such that the following assertions hold true.
\begin{itemize}
\item[(i)] There holds \begin{align*}
\|P_H (\tilde{u}_H - \tilde{u}_H^\ell)\|_V \lesssim H^{-2} \sqrt{N} \mathrm{e}^{-s\ell}\|u\|_V.
\end{align*}
\item[(ii)] We have the bound
\begin{align*}
\lvert \langle q_i, u - \tilde{u}_H^\ell\rangle \rvert =\lvert \langle q_i, \tilde{u}_H - \tilde{u}_H^\ell\rangle \rvert  \lesssim H^{-2} \sqrt{N} \mathrm{e}^{-s\ell}\|u\|_V\qquad \forall i\in\{1,\dots,N\}
\end{align*}
for the error in the quantities of interest.
\item[(iii)] We have the error bound
\begin{align*}
\|u-\tilde{u}_H^\ell\|_{H^1(\Omega)} \lesssim \left(H  + H^{-2} \sqrt{N} \mathrm{e}^{-s\ell}\right)\|f\|_{L^2(\Omega)}.
\end{align*}
\end{itemize}
\end{thrm}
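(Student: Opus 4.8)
The plan is to treat the localized scheme \cref{numhom scheme} as a perturbation of the ideal scheme \cref{var problem discrete}, for which well-posedness and optimal error bounds are already available from \cref{Thm: Analysis ideal method}. The crucial quantitative input is the Localization error for correctors, which controls $\|(\calC-\calC_\ell)u_H\|_V$ (and, by the analogous statement, $\|(\calC^\ast-\calC^\ast_\ell)u_H\|_V$) by $H^{-2}\sqrt N\,\mathrm e^{-s\ell}\|u_H\|_V$. First I would establish a uniform inf-sup condition for $a$ on $\tilde U_H^\ell\times\tilde V_H^\ell$ by comparing with the inf-sup stability on $\tilde U_H\times\tilde V_H$ proved inside \cref{Thm: Analysis ideal method}. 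Concretely: any $\tilde u_H^\ell=(1-\calC_\ell)u_H\in\tilde U_H^\ell$ differs from $(1-\calC)u_H\in\tilde U_H$ by $(\calC-\calC_\ell)u_H$, whose norm is exponentially small; applying $P_H$ and using that $P_H$ annihilates $W$ shows $P_H\tilde u_H^\ell=P_H\tilde u_H=u_H$, so the basis coefficients of the localized space are still controlled by $\|u_H\|_V$ via \cref{Lmm: Stab of basis rep} and \cref{Lmm: Stab PH}. A standard perturbation argument (subtracting the exponentially small corrector difference inside the supremum defining the inf-sup constant) then gives, for $\ell\gtrsim\log(H^{-2}\sqrt N)$ large enough that $C H^{-2}\sqrt N\,\mathrm e^{-s\ell}\le\tfrac12\alpha^2 C_a^{-1}$, a uniform inf-sup constant (say $\tfrac12\alpha^2C_a^{-1}$) for $a$ on $\tilde U_H^\ell\times\tilde V_H^\ell$; by the Babu\v{s}ka--Lax--Milgram theorem this yields existence and uniqueness of $\tilde u_H^\ell$.

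For (i), I would write $\tilde u_H=(1-\calC)u_H^{\mathrm{id}}$ and $\tilde u_H^\ell=(1-\calC_\ell)u_H^{\ell}$ for the corresponding coefficient vectors $u_H^{\mathrm{id}},u_H^\ell\in U_H$, apply $P_H$ to both (so $P_H\tilde u_H=u_H^{\mathrm{id}}$, $P_H\tilde u_H^\ell=u_H^\ell$), and estimate $\|u_H^{\mathrm{id}}-u_H^\ell\|_V$ using the inf-sup stability on $\tilde V_H^\ell$ together with the identity $a(\tilde u_H,\tilde v_H^\ell)-a(\tilde u_H^\ell,\tilde v_H^\ell)=a(\tilde u_H,\tilde v_H^\ell)-\langle F,\tilde v_H^\ell\rangle$; since $\tilde u_H$ solves the ideal problem only against $\tilde V_H$, the right-hand side becomes $a(\tilde u_H,\tilde v_H^\ell-(1-\calC^\ast)u_H^\ell)$ plus $a(\tilde u_H,(1-\calC^\ast)u_H^\ell)-\langle F,(1-\calC^\ast)u_H^\ell\rangle=0$ by Galerkin orthogonality, i.e. it reduces to $a(\tilde u_H,(\calC^\ast-\calC_\ell^\ast)u_H^\ell)$, controlled by $C_a\|\tilde u_H\|_V\cdot H^{-2}\sqrt N\mathrm e^{-s\ell}\|u_H^\ell\|_V$. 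Bounding $\|\tilde u_H\|_V$ and $\|u_H^\ell\|_V$ by $\|u\|_V$ (via \cref{Thm: Analysis ideal method}(i) and \cref{Lmm: Stab PH} applied to $\tilde u_H^\ell$, absorbing the resulting $\|\tilde u_H^\ell\|_V$ back for $\ell$ large) gives the claimed bound on $\|P_H(\tilde u_H-\tilde u_H^\ell)\|_V=\|u_H^{\mathrm{id}}-u_H^\ell\|_V$.

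For (ii), since $\langle q_i,\cdot\rangle$ vanishes on $W$ and $\calC_\ell$ maps into $W$, we have $\langle q_i,\tilde u_H^\ell\rangle=\langle q_i,u_H^\ell\rangle$ and likewise $\langle q_i,\tilde u_H\rangle=\langle q_i,u_H^{\mathrm{id}}\rangle$; combined with \cref{Thm: Analysis ideal method}(ii) (which gives $\langle q_i,\tilde u_H\rangle=\langle q_i,u\rangle$) and with (i), noting $|\langle q_i,w\rangle|\lesssim\|w\|_V$ uniformly (as each $q_i\in V^\ast$ with norm bounded independently of $H$, or directly from $\langle q_i,\cdot\rangle=\langle q_i,P_H\cdot\rangle$), the estimate follows. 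For (iii), I would use the triangle inequality $\|u-\tilde u_H^\ell\|_{H^1}\le\|u-\tilde u_H\|_{H^1}+\|\tilde u_H-\tilde u_H^\ell\|_{H^1}$; the first term is $\lesssim H\|f\|_{L^2}$ by \cref{Thm: Analysis ideal method}(iii), and for the second I would decompose $\tilde u_H-\tilde u_H^\ell=P_H(\tilde u_H-\tilde u_H^\ell)+(1-P_H)(\tilde u_H-\tilde u_H^\ell)$, bounding the first part in $V$ by (i), and the second part (which lies in $W$) in $H^1$ by \cref{bd for w in W} by $H\|D^2(\cdot)\|_{L^2}\lesssim H\|\tilde u_H-\tilde u_H^\ell\|_V\lesssim H(\|\calC u\|_V+\|P_H(\tilde u_H-\tilde u_H^\ell)\|_V+\text{corrector differences})$, all of which are $\lesssim(H+H^{-2}\sqrt N\mathrm e^{-s\ell})\|f\|_{L^2}$ after using \cref{Rk: H2 bound on u}.

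The main obstacle I anticipate is bookkeeping the chain of perturbation estimates so that the various $\|u_H^\ell\|_V$, $\|\tilde u_H^\ell\|_V$ terms are consistently reabsorbed: one must fix the threshold $\ell\gtrsim\log(H^{-2}\sqrt N)$ (and, strictly, also $\ell\gtrsim\log(H^{-4}N)$ if the $\sqrt N$ appears twice) early enough that the exponentially small factor beats the $H^{-2}\sqrt N$ and all hidden constants simultaneously, and then keep the uniform inf-sup constant $\tfrac12\alpha^2C_a^{-1}$ fixed throughout. The corrector-difference estimates and the $W$-interpolation bound \cref{bd for w in W} are the only nontrivial ingredients, and both are already established.
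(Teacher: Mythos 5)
Your overall architecture matches the paper's where the paper gives details. For assertion (iii) the paper performs exactly the same triangle-inequality decomposition through $P_H$ and $1-P_H$, uses Theorem~\ref{Thm: Analysis ideal method} and assertion (i) for the first two pieces, and then reduces the remaining piece to an estimate of $\|\tilde e_H\|_V$ exactly as you do. The only local difference is how $\|(1-P_H)\tilde e_H\|_{H^1(\Omega)}$ is bounded by $H\|\tilde e_H\|_V$: you use that $(1-P_H)\tilde e_H\in W$ together with the global fine-scale bound \cref{bd for w in W} and the stability of $P_H$ from \cref{Lmm: Stab PH}, whereas the paper instead expands $1-P_H=(1-\tilde E_H)\Pi^{\mathrm{Mor}}+(1-\Pi^{\mathrm{Mor}})$ and invokes the enrichment bound \cref{EHtildebd} plus a Morley interpolation estimate. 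Both routes are valid and give the same $H\|\tilde e_H\|_V$ bound; yours is slightly more direct. For the final step $\|\tilde e_H\|_V\lesssim\|f\|_{L^2(\Omega)}$, the paper uses $\|\tilde e_H\|_V\le\|u-\tilde u_H\|_V+\|u-\tilde u_H^\ell\|_V$ and quasi-optimality of the localized Petrov--Galerkin scheme, which is a bit cleaner than the three-term decomposition you sketch, but the content is the same.

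For well-posedness and assertions (i)--(ii), the paper simply cites ``identical arguments as in \cite{AHP21}''; your perturbation argument (uniform inf-sup for $a$ on $\tilde U_H^\ell\times\tilde V_H^\ell$ obtained by comparing with $\tilde U_H\times\tilde V_H$ and absorbing exponentially small corrector differences once $\ell\gtrsim\log(H^{-2}\sqrt N)$) is the standard route and consistent with that framework. One small omission in your sketch of (i): after testing the Galerkin orthogonality of $\tilde u_H$ against a nearby element $\tilde v_H=(1-\calC^\ast)v_H'$ of $\tilde V_H$, the residual $a(\tilde u_H,\tilde v_H^\ell)-\langle F,\tilde v_H^\ell\rangle$ reduces not merely to $a(\tilde u_H,(\calC^\ast-\calC_\ell^\ast)v_H')$ but to
\begin{align*}
a(\tilde u_H,(\calC^\ast-\calC_\ell^\ast)v_H') - \langle F,(\calC^\ast-\calC_\ell^\ast)v_H'\rangle,
\end{align*}
so you have dropped the $F$-term. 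It is harmless --- it is controlled by $\|F\|_{V^\ast}\cdot\|(\calC^\ast-\calC_\ell^\ast)v_H'\|_V\lesssim\|u\|_V\cdot H^{-2}\sqrt N\,\mathrm e^{-s\ell}\|v_H'\|_V$, which has the same form as the term you retained --- but it should be included for a complete argument.
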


\begin{proof}
The well-posedness of \cref{numhom scheme} and the bounds from (i)--(ii) can be shown using identical arguments as in \cite{AHP21}. It remains to prove assertion (iii). To this end, we first use the triangle inequality, \cref{Thm: Analysis ideal method}, and (i) to find that
\begin{align}\label{Pf loc numhom 1}
\begin{split}
\|u-\tilde{u}_H^\ell\|_{H^1(\Omega)} &\leq \|u - \tilde{u}_H\|_{H^1(\Omega)} + \|P_H(\tilde{u}_H- \tilde{u}_H^\ell)\|_{H^1(\Omega)} +  \|(1-P_H)(\tilde{u}_H-\tilde{u}_H^\ell)\|_{H^1(\Omega)} \\ &\lesssim H \|f\|_{L^2(\Omega)} + H^{-2} \sqrt{N} \mathrm{e}^{-s\ell}\|f\|_{L^2(\Omega)} + \|(1-P_H)\tilde{e}_H^{\ell}\|_{H^1(\Omega)}
\end{split}
\end{align}
for some constant $s>0$, where $\tilde{e}_H^{\ell}\coloneqq \tilde{u}_H-\tilde{u}_H^\ell$. Next, using the triangle inequality, \cref{Rk: PH char}(i), the bound \cref{EHtildebd}, and a Morley interpolation estimate, we obtain that
\begin{align}\label{Pf loc numhom 2}
\begin{split}
\|(1-P_H)\tilde{e}_H^{\ell}\|_{H^1(\Omega)}&\leq \|(1-\tilde{E}_H)\Pi^{\mathrm{Mor}}\tilde{e}_H^{\ell}\|_{H^1(\Omega)} + \|(1-\Pi^{\mathrm{Mor}})\tilde{e}_H^{\ell}\|_{H^1(\Omega)} \\&\lesssim H\|D^2_{\mathrm{NC}}(\Pi^{\mathrm{Mor}}\tilde{e}_H^{\ell} - \tilde{e}_H^{\ell})\|_{L^2(\Omega)} + H \|\tilde{e}_H^{\ell}\|_{V} \\&\lesssim H\|\tilde{e}_H^{\ell}\|_{V}.
\end{split}
\end{align}
Finally, by the triangle inequality, \cref{Thm: Analysis ideal method}(ii), \cref{Rk: C well def} and quasi-optimality of the Petrov--Galerkin scheme \cref{numhom scheme,Rk: H2 bound on u}, we have that
\begin{align}\label{Pf loc numhom 3}
\|\tilde{e}_H^{\ell}\|_{V} \leq \|u-\tilde{u}_H\|_{V} + \|u-\tilde{u}_H^\ell\|_V = \|\calC u\|_{V} + \|u-\tilde{u}_H^\ell\|_V \lesssim \|u\|_V\lesssim \|f\|_{L^2(\Omega)}.
\end{align}
Combining \cref{Pf loc numhom 1}--\cref{Pf loc numhom 3} yields the desired bound.
\end{proof}

\section{Numerical experiments}\label{sec:Numerical Experiments}
In this section, we numerically investigate the proposed numerical homogenization scheme for nondivergence-form PDEs, which we abbreviate as LOD, due to its origin. We compare it to a conforming Birkhoff--Mansfield finite element method on the respective coarse mesh with mesh size $H$, denoted as FEM in the convergence plots. To simplify the presentation, $ H $ and $ h $ denote the minimal side lengths of the elements instead of their diameters in the remainder of this section.

\subsection{Conforming discretization}\label{sec:Fine-scale discretization by conforming Birkhoff--Mansfield elements}
The method presented in \cref{sec:Numerical homogenization scheme} is not yet discrete as it relies on the solution of the localized version of the saddle-point problem presented in \cref{rem:saddlepoint}, which is still in continuous form. For a finite element discretization we choose a finite-dimensional subspace $V_h \subset V$.
Here we choose $V_h$ to be the $H^2$-conforming reduced 
Birkhoff--Mansfield element space \cite{BM74,Cia78}.
Given a triangle $T$, we define the space $X(T)$ to be the sum of the
space of tricubic polynomials over $T$ (the polynomials that are
cubic when restricted to any line parallel to one of the triangle's edges)
and the three rational functions $\lambda_{1}^2\lambda_{2}/(1-\lambda_{3})$
(with cyclic permutation of the indices $1,2,3$).
The local shape function space $\mathrm{BM}(T)$ is then the
nine-dimensional subspace of $X(T)$
consisting of those functions whose normal derivative
on any of the three edges is affine.
The space $V_h$ is defined as the subspace of $H^2(\Omega)\cap H^1_0(\Omega)$
of functions that belong to $\mathrm{BM}(T)$ when restricted to any 
triangle $T$.
The nine local degrees of freedom are the point evaluation of the function
and of its gradient in the three vertices of any triangle.
Fore more details on the method and its variants, we refer to
\cite{BM74}.

\subsection{Implementation}

In all our experiments we consider the computational domain 
\begin{align*}
\Omega \coloneqq  (-1,1)^2\subset \R^2
\end{align*}
and use a mesh $\calT_h$ for the fine-scale discretization that resolves all small oscillations of the coefficients. We evaluate relative errors in the $L^2$-norm, the $H^1$-seminorm and the $H^2$-seminorm with respect to a reference solution originating from the fine mesh $\calT_h$ with $h=2^{-8}$. For the implementation, we used \texttt{Matlab} and extended the code provided in \cite{MalP20}.

For the demonstration of the multiscale method, we consider three choices of heterogeneous coefficients in combination with a right-hand side $f\in\{f^{(1)}, f^{(2)}, f^{(3)}\}$, where $f^{(2)}:=f^{(1)}$ and the functions $f^{(1)},f^{(3)}$ are given by
\begin{align*}
f^{(1)}(x)\coloneqq  (x_1 + \cos(3 \pi x_1)) x_2^3,\qquad f^{(3)}(x) \coloneqq  f^{(1)}(x) + 2\,\Theta(x_1)
\end{align*}
for $x = (x_1,x_2)\in \Omega$, where $\Theta(t):=\mathbbm{1}_{t>0}$ for $t\in \R$. Note that $f^{(3)}\in L^2(\Omega)\backslash H^1(\Omega)$.

\subsection{Periodic coefficient example}

We begin by considering a configuration with periodic coefficients. The coefficient $A$ is chosen as $A\coloneqq  A^{(1)}$, where $A^{(1)} \coloneqq  \left.\tilde{A}^{(1)}\left(\frac{\cdot}{\eps}\right)\right\rvert_{\Omega}$ with $\eps \coloneqq  2^{-6}$ and $\tilde{A}^{(1)}:\R^2\rightarrow\R^{2\times 2}_{\mathrm{sym}}$ is defined as
\begin{align*}
	\tilde{A}^{(1)}(x) \coloneqq  \begin{pmatrix}
		\frac{11}{4} + \frac{1}{4} \sin(\pi x_1)\cos(\pi x_2) & \sign(\sin(\pi x_1) \sin(\pi x_2)) \\
		\sign(\sin(\pi x_1) \sin(\pi x_2)) & \frac{7}{2} + \frac{1}{2} \cos^2(\pi x_1) \\
	\end{pmatrix}
\end{align*}
for $x = (x_1,x_2)\in\R^2$. We perform two numerical experiments with this periodic coefficient $A = A^{(1)}$. The right-hand side is chosen as $f:= f^{(1)}$. 

\subsubsection{Experiment 1: Vanishing lower-order terms}
\Cref{fig:conv_periodic} shows the corresponding errors for the case of vanishing lower-order terms ($\lvert b\rvert = c = 0$ a.e. in $\Omega$). Note that the Cordes condition \eqref{CordesC1} is satisfied by \cref{Rk: C1 in 2d}.

\subsubsection{Experiment 2: Non-vanishing lower-order terms}
For the case of non-vanishing lower order terms, we choose $b:= b^{(1)}$ and $c:= c^{(1)}$, where $b^{(1)} \coloneqq  \left.\tilde{b}^{(1)}\left(\frac{\cdot}{\eps}\right)\right\rvert_{\Omega}$, $c^{(1)} \coloneqq  \left.\tilde{c}^{(1)}\left(\frac{\cdot}{\eps}\right)\right\rvert_{\Omega}$ with $\eps =  2^{-6}$  and $\tilde{b}^{(1)}:\R^2\rightarrow \R^2$, $\tilde{c}^{(1)}:\R^2\rightarrow \R$ are given by
\begin{align*}
\tilde{b}^{(1)}(x) \coloneqq \begin{pmatrix}
\frac{3}{5} \sign(\sin(\pi x_1) \sin(\pi x_2))\\
\arcsin(\sin^2(\pi x_1)) - \frac{4}{5}
\end{pmatrix}, \qquad \tilde{c}^{(1)}(x)\coloneqq \frac{29}{10} + \frac{1}{10} \sign(\sin(\pi x_1) \sin(\pi x_2))
\end{align*}
for $x = (x_1,x_2)\in\R^2$. Note that the Cordes condition \eqref{CordesC2} is satisfied with $\lambda = 1$ since 
\begin{align*}
\frac{\lvert A^{(1)}\rvert^2 + \frac{1}{2}\lvert b^{(1)}\rvert^2 + \left(c^{(1)}\right)^2}{\left(\mathrm{tr}(A^{(1)}) + c^{(1)}\right)^2}\leq \frac{27+\frac{1}{2}+9}{(6+\frac{14}{5})^2} = \frac{1}{2 + \frac{222}{1825}}\quad\text{a.e. in }\Omega.
\end{align*}
The corresponding errors are depicted in \cref{fig:conv_periodic_lo}.

\begin{figure}
	\input{conv_BM_morley_BM_coeff_periodic_lambda_0_ref_9/conv_BM_morley_BM_coeff_periodic_coefflvl_7_rhs_C_ref_9_Hmin_1_Hmax_6_lmin_1_lmax_8_interpolation_nodal.tex}
	\caption{\small Relative errors for the periodic problem with vanishing lower-order terms ($A = A^{(1)}$, $b = 0$, $c = 0$, $f = f^{(1)}$).}
	\label{fig:conv_periodic}
\end{figure}

\begin{figure}
	\input{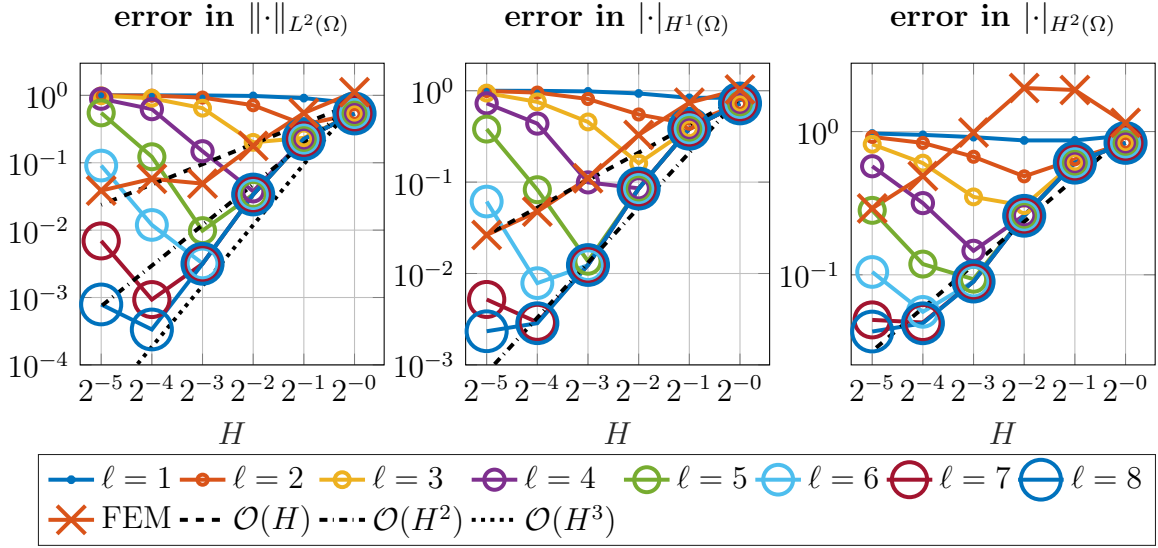}
	\caption{\small Relative errors for the periodic problem with non-vanishing lower-order terms ($A = A^{(1)}$, $b = b^{(1)}$, $c = c^{(1)}$, $f = f^{(1)}$) and $\lambda=1$.}
	\label{fig:conv_periodic_lo}
\end{figure}

\subsection{Crack coefficient example}\label{Subsec: Cr ex}
Next, we consider an example with $A:=A^{(2)}$, where 
\begin{align*}
	A^{(2)} \coloneqq  \begin{pmatrix}
		2 & a^{(2)}_{12} \\
		a^{(2)}_{12} & 2
	\end{pmatrix}
\end{align*}
and $a^{(2)}_{12}$ is the realization of a background random field taking piecewise constant values on $\calT_{2^{-6}}$, which are independent and identically distributed in the interval $[-1,-0.9] $, which is combined with a channel taking values close to $1$ (note $\lvert a^{(2)}_{12}\rvert \leq 1$); see \Cref{subfig:crack_A12}. We perform two numerical experiments with this ``crack coefficient" $A = A^{(2)}$. The right-hand side is chosen as $f:= f^{(2)}$.

\subsubsection{Experiment 1: Vanishing lower-order terms}
\cref{fig:conv_crack} shows the corresponding errors for the case of vanishing lower-order terms ($\lvert b\rvert = c = 0$ a.e. in $\Omega$). Note that the Cordes condition \eqref{CordesC1} is satisfied by \cref{Rk: C1 in 2d}. 

\subsubsection{Experiment 2: Non-vanishing lower-order terms}

For the case of non-vanishing lower order terms, we choose $b:= b^{(2)} := (b^{(2)}_1,b^{(2)}_2)$ and $c:=c^{(2)}$ that contain cracks at a different position than $a_{12}^{(2)}$. The function $b^{(2)}_1$ consists of a background random field taking values in the interval $[-0.1,0.1]$ with a crack that varies in the interval $[-1,1]$. For $b^{(2)}_2$, the random background is identical, whereas the crack varies in $[-0.6,0.6]$. Finally, $c^{(2)}$ consists of a random background taking values in $[3,3.1]$ with a crack varying in $[3,4]$. \Cref{fig:coeff_crack} depicts plots of these coefficients. Note that the Cordes condition \eqref{CordesC2} is satisfied with $\lambda = 2$ since 
\begin{align*}
\frac{\lvert A^{(2)}\rvert^2 + \frac{1}{4}\lvert b^{(2)}\rvert^2 + \frac{1}{4}\left(c^{(2)}\right)^2}{\left(\mathrm{tr}(A^{(2)}) + \frac{1}{2}c^{(2)}\right)^2}\leq \frac{10+\frac{17}{50}+4}{(4+\frac{3}{2})^2} = \frac{1}{2+\frac{157}{1434}}\quad\text{a.e. in }\Omega.
\end{align*}
The corresponding errors are depicted in \cref{fig:conv_crack_lo}.

\begin{figure}
\begin{subfigure}{0.32\textwidth}
		\includegraphics[width=\textwidth]{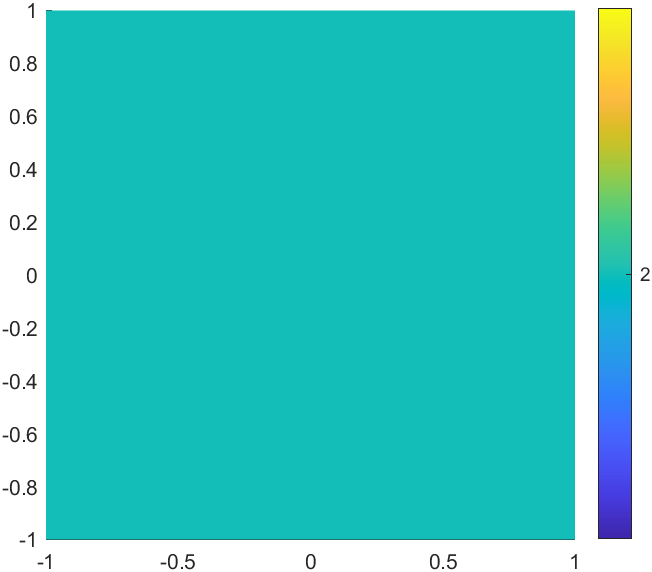}
		\subcaption{$a^{(2)}_{11}$}
		\label{subfig:crack_A11}
	\end{subfigure}
	\begin{subfigure}{0.32\textwidth}
		\includegraphics[width=\textwidth]{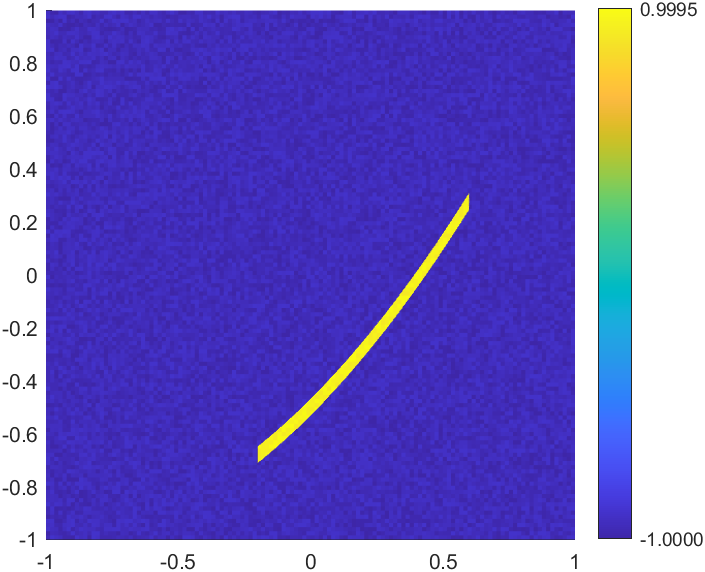}
		\subcaption{$a^{(2)}_{12}$}
		\label{subfig:crack_A12}
	\end{subfigure}
	\begin{subfigure}{0.32\textwidth}
		\includegraphics[width=\textwidth]{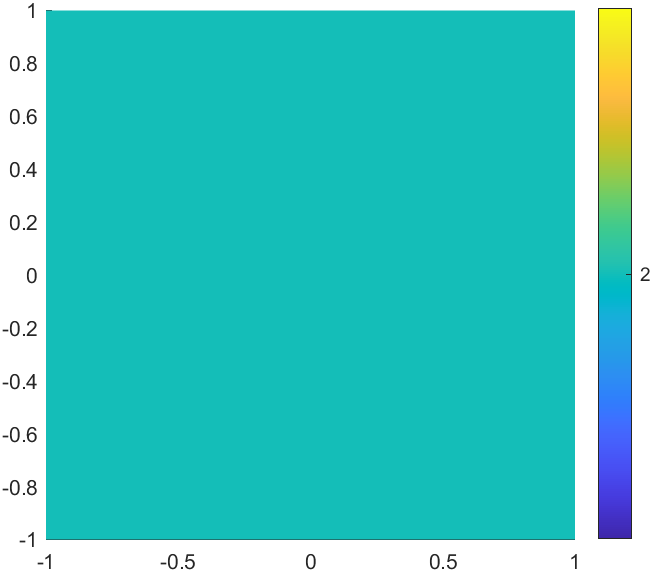}
		\subcaption{$a^{(2)}_{22}$}
		\label{subfig:crack_A22}
	\end{subfigure}
	\begin{subfigure}{0.32\textwidth}
		\includegraphics[width=\textwidth]{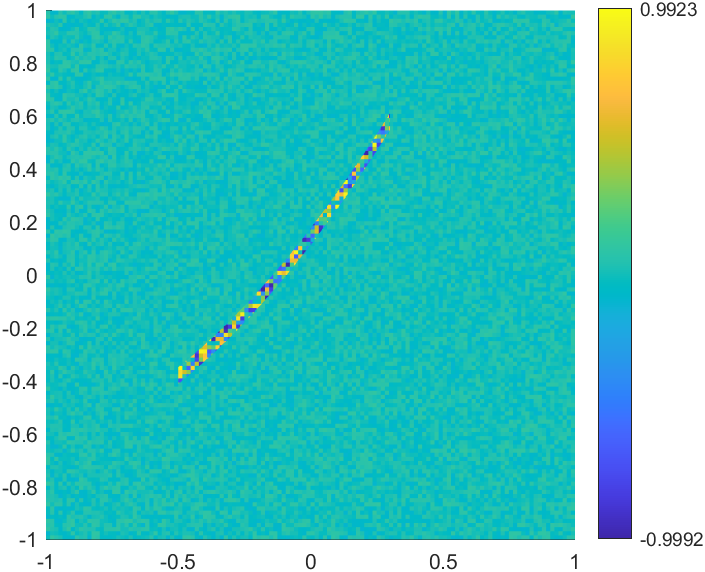}
		\subcaption{$b^{(2)}_{1}$}
		\label{subfig:crack_b1}
	\end{subfigure}
	\begin{subfigure}{0.32\textwidth}
		\includegraphics[width=\textwidth]{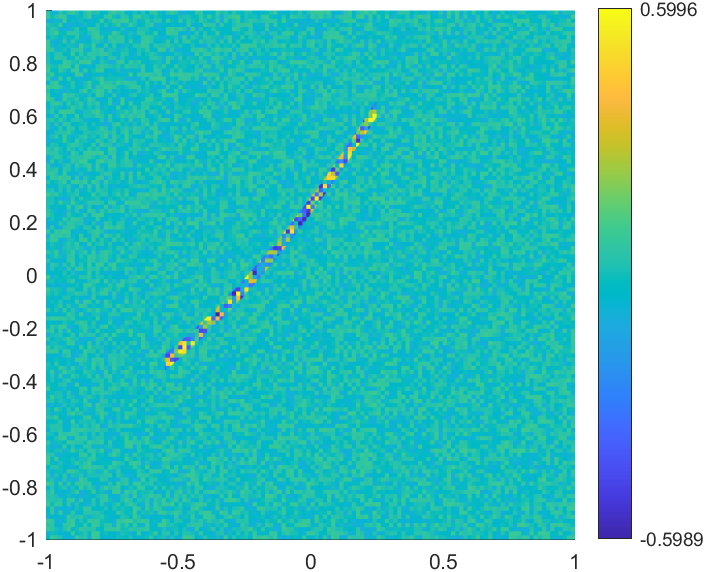}
		\subcaption{$b^{(2)}_{2}$}
		\label{subfig:crack_b2}
	\end{subfigure}
	\begin{subfigure}{0.32\textwidth}
		\includegraphics[width=\textwidth]{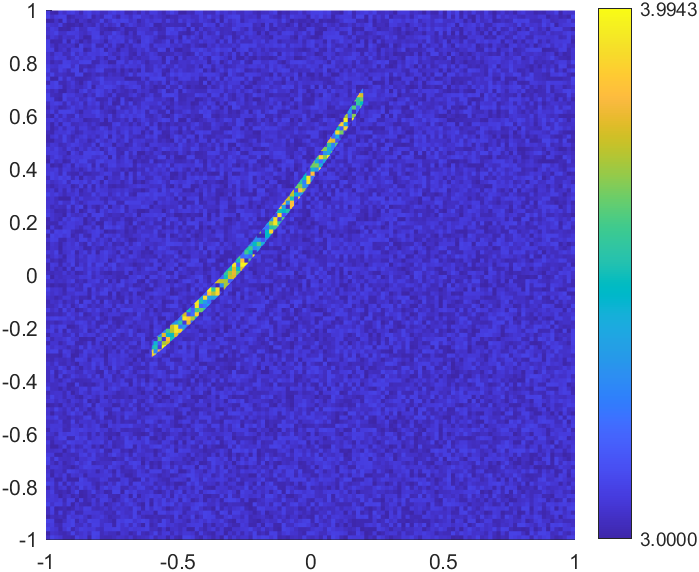}
		\subcaption{$c^{(2)}$}
		\label{subfig:crack_c}
	\end{subfigure}
	\caption{Illustration of the coefficients chosen in \cref{Subsec: Cr ex}.}
	\label{fig:coeff_crack}
\end{figure}

\begin{figure}
	\input{conv_BM_morley_BM_coeff_CrackI_lambda_0_ref_9/conv_BM_morley_BM_coeff_CrackI_coefflvl_7_rhs_C_ref_9_Hmin_1_Hmax_6_lmin_1_lmax_8_interpolation_nodal.tex}
	\caption{\small Relative errors for the crack problem with vanishing lower-order terms ($A = A^{(2)}$, $b = 0$, $c = 0$, $f = f^{(2)}$).}
	\label{fig:conv_crack}
\end{figure}

\begin{figure}
	\input{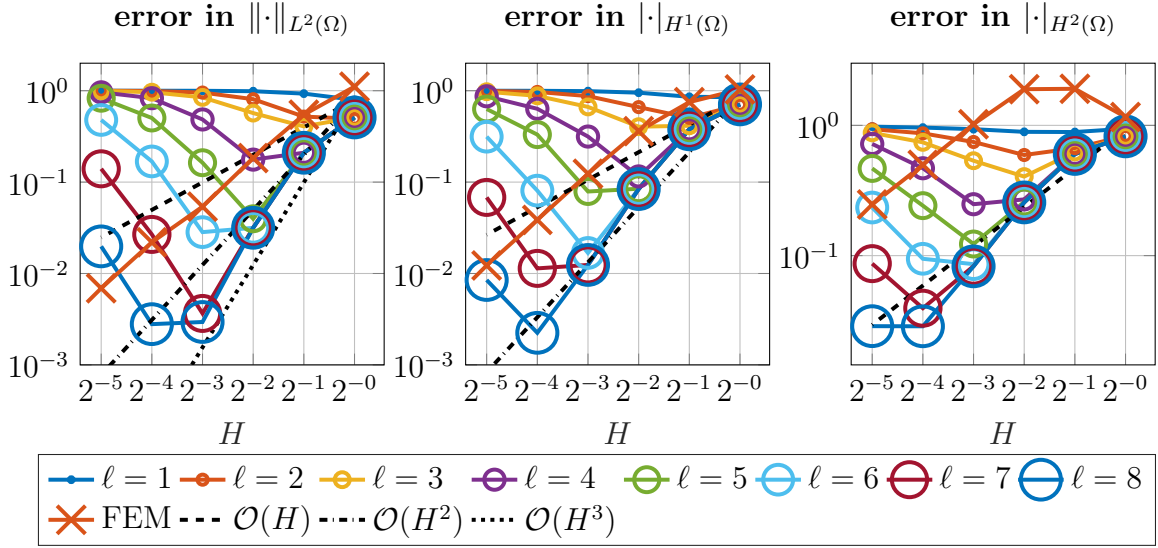}
	\caption{\small Relative errors for the crack problem with non-vanishing lower-order terms ($A = A^{(2)}$, $b = b^{(2)}$, $c = c^{(2)}$, $f = f^{(2)}$) and $\lambda=2$.}
	\label{fig:conv_crack_lo}
\end{figure}

\subsection{Combined example}\label{Subsec: combicoeff}
The final example combines various types of heterogeneities and the right-hand side is chosen as $f := f^{(3)}\in L^2(\Omega)\backslash H^1(\Omega)$. A plot of the chosen coefficients $A:=A^{(3)}$, $b:=b^{(3)}$, and $c:=c^{(3)}$ is given in \cref{fig:coeff_combi}. Note that the Cordes condition \eqref{CordesC2} is satisfied with $\lambda = 1$ since 
\begin{align*}
\frac{\lvert A^{(3)}\rvert^2 + \frac{1}{2}\lvert b^{(3)}\rvert^2 + \left(c^{(3)}\right)^2}{\left(\mathrm{tr}(A^{(3)}) + c^{(3)}\right)^2}\leq \frac{27+\frac{1}{2}+9}{(6+\frac{14}{5})^2}= \frac{1}{2 + \frac{222}{1825}} \quad\text{a.e. in }\Omega.
\end{align*}
The corresponding errors are depicted in \cref{fig:conv_combi_lo}.

\begin{figure}
	\begin{subfigure}{0.32\textwidth}
		\includegraphics[width=\textwidth]{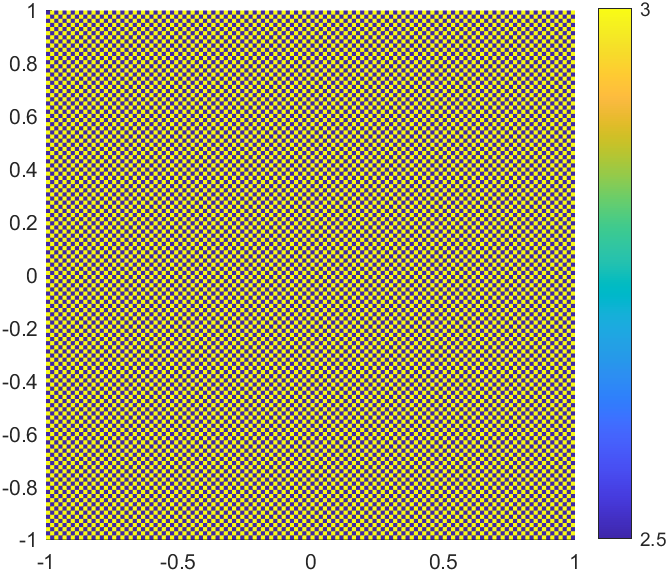}
		\subcaption{$a^{(3)}_{11}$}
		\label{subfig:combi_A11}
	\end{subfigure}
	\begin{subfigure}{0.32\textwidth}
		\includegraphics[width=\textwidth]{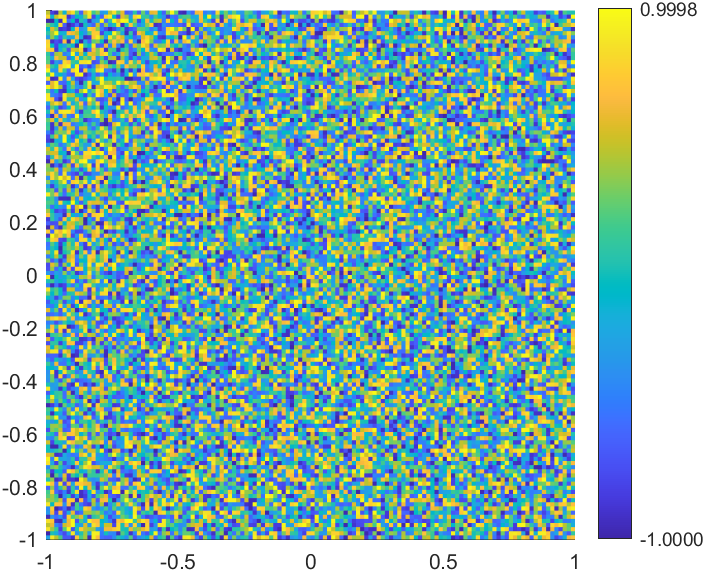}
		\subcaption{$a^{(3)}_{12}$}
		\label{subfig:combi_A12}
	\end{subfigure}
	\begin{subfigure}{0.32\textwidth}
		\includegraphics[width=\textwidth]{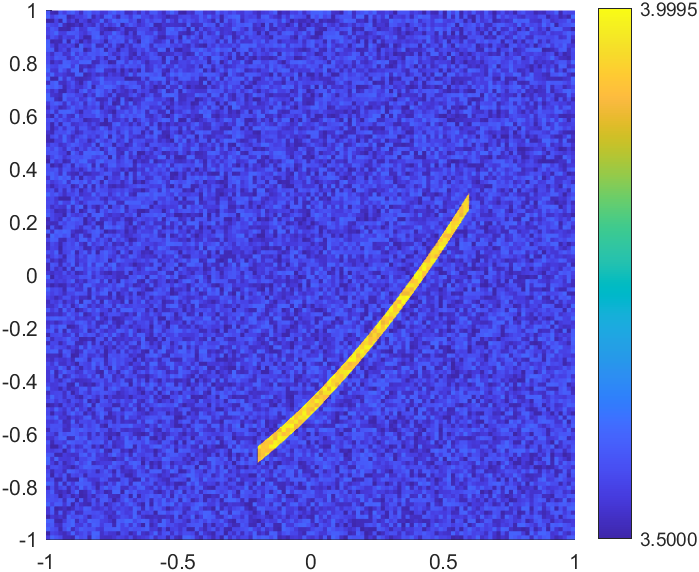}
		\subcaption{$a^{(3)}_{22}$}
		\label{subfig:combi_A22}
	\end{subfigure}
	\begin{subfigure}{0.32\textwidth}
		\includegraphics[width=\textwidth]{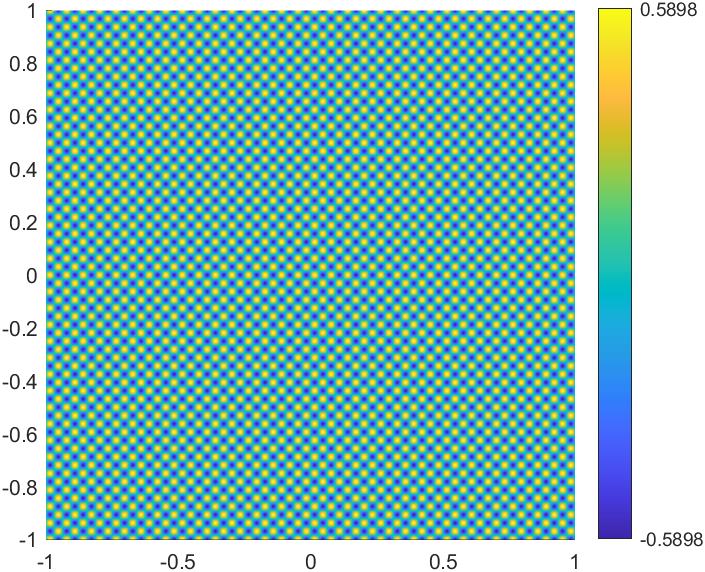}
		\subcaption{$b^{(3)}_{1}$}
		\label{subfig:combi_b1}
	\end{subfigure}
	\begin{subfigure}{0.32\textwidth}
		\includegraphics[width=\textwidth]{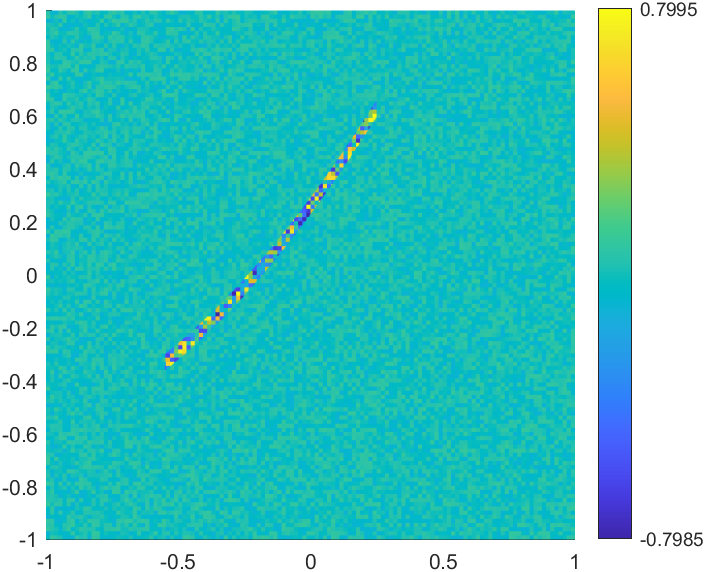}
		\subcaption{$b^{(3)}_{2}$}
		\label{subfig:combi_b2}
	\end{subfigure}
	\begin{subfigure}{0.32\textwidth}
		\includegraphics[width=\textwidth]{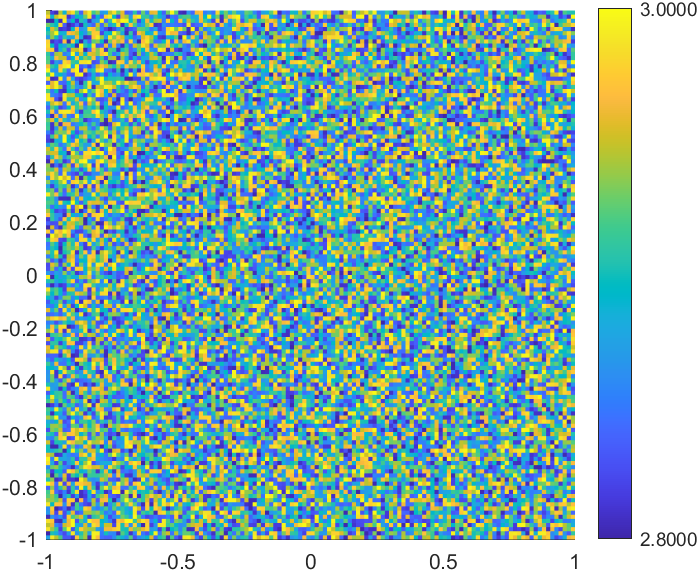}
		\subcaption{$c^{(3)}$}
		\label{subfig:combi_c}
	\end{subfigure}
	\caption{Illustration of the coefficients chosen in \cref{Subsec: combicoeff}.}
	\label{fig:coeff_combi}
\end{figure}

\begin{figure}
	\input{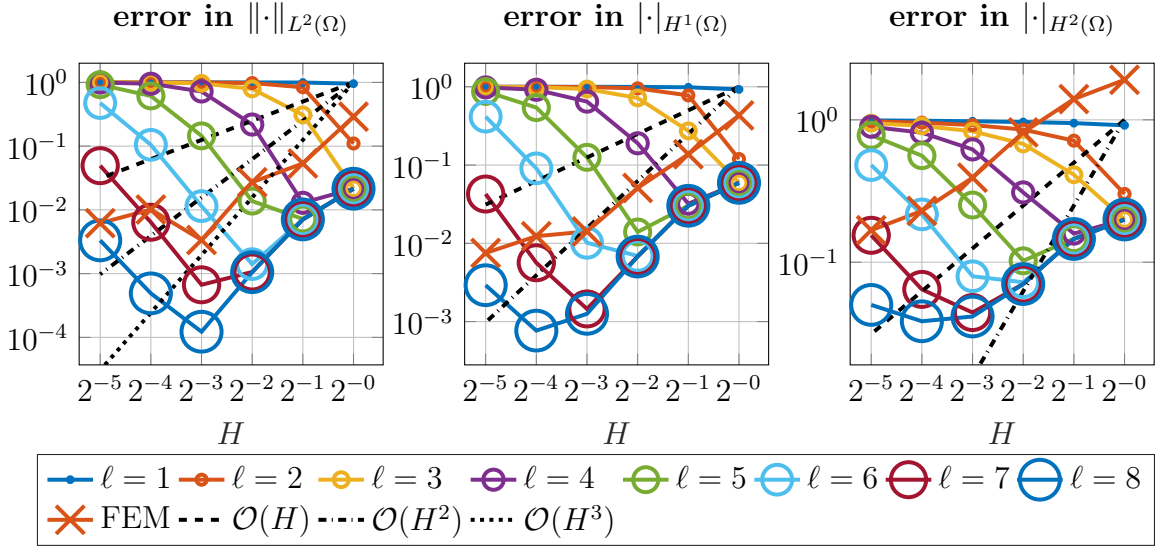}
	\caption{\small Relative errors for the combined problem ($A=A^{(3)}$, $b=b^{(3)}$, $c=c^{(3)}$, $f = f^{(3)}$) and $\lambda=1$.}
	\label{fig:conv_combi_lo}
\end{figure}

\subsection{Conclusions from the numerical experiments}

In our numerical experiments, we observe that the approximation errors for the LOD scheme in the $L^2$-norm, the $H^1$-seminorm and the $H^2$-seminorm stay below the corresponding errors for the classical $H^2$-conforming finite element method and decay at a faster rate with respect to $H$ (order 3/2/1 for LOD as compared to order 1-2/1-2/1 for classical FEM) for $\ell\approx \lvert\log(H^2)\rvert$. The numerical homogenization scheme further avoids the pre-asymptotic behavior of the classical $H^2$-conforming finite element scheme in the $H^2$-norm. Let us note that the increase of the approximation error of the LOD method for fixed $\ell$ and small $H$ is as expected and that it can be explained with the negative powers of $H$ in the error bound from Theorem \ref{Thm: Analysis LOD}(iii). As compared to the result of Theorem \ref{Thm: Analysis LOD}, the numerical evidence suggests that the proposed method also converges in $H^2$ and that an improved $H^1$-error bound might be possible to achieve for right-hand sides $f\in H^1(\Omega)$. The results from \cref{Subsec: combicoeff} indicate that the $\calO(H)$ term in our estimate from Theorem \ref{Thm: Analysis LOD}(iii) is sharp for $f\in L^2(\Omega)\backslash H^1(\Omega)$.

\section{Alternative discretization using a mixed FEM}\label{sec:Alternative discretization using a mixed non-symmetric FEM}

We emphasize that many other discretizations and combinations with quantities of interest are possible. The method from \cref{sec:Numerical homogenization scheme} is one particular example and represents the proof of concept that numerical homogenization for nondivergence-form PDEs is possible. During the work on this project many other possible discretizations were at hand and we want to present one particularly simple example, which is also quite efficient in the computations and allows using standard finite element techniques. The idea is to use a mixed formulation introduced in \cite{Gal17a,Gal17b} and improved in \cite{Gal19}. For the detailed derivation of the scheme we refer to the respective references. Hence, in this section, we use the mixed method for the solution of the global problem \cref{eq:weak} and the local problems as given in \cref{rem:saddlepoint} subject to a uniformly refined mesh $\calT_h$ that resolves all fine scales. It should be emphasized that the admissible right-hand
sides when using a mixed system instead of the problem from \cref{rem:saddlepoint}
reduce to a strict subspace of $V^{\ast}$.
In particular, point evaluations are unbounded functionals in
the mixed setting. Therefore, in our computations,
the quantities of interest $q_j$ in the problem from \cref{rem:saddlepoint} are
replaced by averaging operations, also known as quasi-interpolation.

We briefly illustrate the performance of the mixed method for the coefficients from the previous section with vanishing lower-order terms. The results can be found in \cref{fig:exp1_mixed,fig:exp2_cos_mixed,fig:exp3_cos_mixed}. In general, we observe that a significantly lower number of oversampling layers is sufficient to achieve similar errors. It is worth noting that in the case of $f\in L^2(\Omega)\backslash H^1(\Omega)$ the order reduction for the error in the $H^1$-seminorm does not seem to be present. Note that a full error analysis of this scheme is not contained in this work but follows along the same lines of what was presented in this work, with modifications for the mixed formulation and the above-mentioned quasi interpolation.

\begin{figure}
%
%
\definecolor{mycolor1}{rgb}{0.00000,0.44700,0.74100}%
\definecolor{mycolor2}{rgb}{0.85000,0.32500,0.09800}%
\begin{tikzpicture}

\begin{axis}[%
width=4cm,
height=4cm,
at={(0cm,0cm)},
scale only axis,
xmode=log,
xmin=0.0234375,
xmax=1.5,
xtick={0.03125,0.0625,0.125,0.25,0.5,1},
xticklabels={{$2^{-5}$},{$2^{-4}$},{$2^{-3}$},{$2^{-2}$},{$2^{-1}$},{$2^{-0}$}},
xminorticks=true,
xlabel style={font=\color{white!15!black}},
xlabel={$H$},
ymode=log,
ymin=1e-04+0.6*(1e-05-1e-04),
ymax=2,
yminorticks=false,
axis background/.style={fill=white},
title style={font=\bfseries},
title={error in $\lVert \cdot \rVert_{L^2(\Omega)}$},
xmajorgrids,
ymajorgrids,
legend style={at={(1.7,-0.5)}, anchor=south, legend columns=5, legend cell align=left, align=left, draw=white!15!black}
]
\addplot [color=mycolor1, line width=1.5pt, mark size=7.5pt, mark=o, mark options={solid, mycolor1}]
  table[row sep=crcr]{%
1	0.641451176073436\\
0.5	0.286988368017015\\
0.25	0.0388455627331714\\
0.125	0.00838471877042072\\
0.0625	0.00118806643441146\\
0.03125	0.000115336433574346\\
};
\addlegendentry{$\ell = \left\lceil \left|\ln(\tfrac{H}{2})\right|\right\rceil$}

\addplot [color=mycolor2, line width=1.5pt, mark size=7.5pt, mark=x, mark options={solid, mycolor2}]
  table[row sep=crcr]{%
1	0.97381867819688\\
0.5	0.8586675082118\\
0.25	0.424153017667632\\
0.125	0.262818117897754\\
0.0625	0.270227833680969\\
0.03125	0.283002128405462\\
};
\addlegendentry{FEM}

\addplot [color=black, dashed, line width=1.5pt]
  table[row sep=crcr]{%
1	0.641451176073436\\
0.03125	0.0200453492522949\\
};
\addlegendentry{$\mathcal{O}(H)$}

\addplot [color=black, dashdotted, line width=1.5pt]
  table[row sep=crcr]{%
1	0.641451176073436\\
0.03125	0.000626417164134215\\
};
\addlegendentry{$\mathcal{O}(H^2)$}

\addplot [color=black, dotted, line width=1.5pt]
  table[row sep=crcr]{%
1	0.641451176073436\\
0.03125	1.95755363791942e-05\\
};
\addlegendentry{$\mathcal{O}(H^3)$}

\end{axis}

\begin{axis}[%
width=4cm,
height=4cm,
at={(5.072cm,0cm)},
scale only axis,
xmode=log,
xmin=0.0234375,
xmax=1.5,
xtick={0.03125,0.0625,0.125,0.25,0.5,1},
xticklabels={{$2^{-5}$},{$2^{-4}$},{$2^{-3}$},{$2^{-2}$},{$2^{-1}$},{$2^{-0}$}},
xminorticks=true,
xlabel style={font=\color{white!15!black}},
xlabel={$H$},
ymode=log,
ymin=1e-02+0.8*(1e-03-1e-02),
ymax=2,
yminorticks=false,
axis background/.style={fill=white},
title style={font=\bfseries},
title={error in $\lvert \cdot \rvert_{H^1(\Omega)}$},
xmajorgrids,
ymajorgrids
]
\addplot [color=mycolor1, line width=1.5pt, mark size=7.5pt, mark=o, mark options={solid, mycolor1}, forget plot]
  table[row sep=crcr]{%
1	0.891883629579547\\
0.5	0.644861335301735\\
0.25	0.261811739282175\\
0.125	0.0843445291245769\\
0.0625	0.0231417030417492\\
0.03125	0.00623295568000836\\
};
\addplot [color=mycolor2, line width=1.5pt, mark size=7.5pt, mark=x, mark options={solid, mycolor2}, forget plot]
  table[row sep=crcr]{%
1	1.2516540031768\\
0.5	0.951162735846531\\
0.25	0.473878057195017\\
0.125	0.215085369844304\\
0.0625	0.167239830930142\\
0.03125	0.168988305945751\\
};
\addplot [color=black, dashed, line width=1.5pt, forget plot]
  table[row sep=crcr]{%
1	0.891883629579547\\
0.03125	0.0278713634243609\\
};
\addplot [color=black, dashdotted, line width=1.5pt, forget plot]
  table[row sep=crcr]{%
1	0.891883629579547\\
0.03125	0.000870980107011277\\
};
\end{axis}

\begin{axis}[%
width=4cm,
height=4cm,
at={(10.145cm,0cm)},
scale only axis,
xmode=log,
xmin=0.0234375,
xmax=1.5,
xtick={0.03125,0.0625,0.125,0.25,0.5,1},
xticklabels={{$2^{-5}$},{$2^{-4}$},{$2^{-3}$},{$2^{-2}$},{$2^{-1}$},{$2^{-0}$}},
xminorticks=true,
xlabel style={font=\color{white!15!black}},
xlabel={$H$},
ymode=log,
ymin=1e-01+0.6*(1e-02-1e-01),
ymax=1.5,
yminorticks=false,
axis background/.style={fill=white},
title style={font=\bfseries},
title={error in $\lvert \cdot \rvert_{H^2(\Omega)}$},
xmajorgrids,
ymajorgrids
]
\addplot [color=mycolor1, line width=1.5pt, mark size=7.5pt, mark=o, mark options={solid, mycolor1}, forget plot]
  table[row sep=crcr]{%
1	0.92045919381489\\
0.5	0.738538503849554\\
0.25	0.450837347332483\\
0.125	0.246901192266358\\
0.0625	0.129788760482899\\
0.03125	0.0713577264016259\\
};
\addplot [color=mycolor2, line width=1.5pt, mark size=7.5pt, mark=x, mark options={solid, mycolor2}, forget plot]
  table[row sep=crcr]{%
1	1.03879956933424\\
0.5	0.975311349298953\\
0.25	0.804254768487447\\
0.125	0.569605198540096\\
0.0625	0.411960144502146\\
0.03125	0.344630482837277\\
};
\addplot [color=black, dashed, line width=1.5pt, forget plot]
  table[row sep=crcr]{%
1	0.92045919381489\\
0.03125	0.0287643498067153\\
};
\end{axis}
\end{tikzpicture}%
	\caption{\small Relative errors for the periodic problem with vanishing lower-order terms ($A = A^{(1)}$, $b = 0$, $c = 0$, $f = f^{(1)}$) using the mixed method.}
	\label{fig:exp1_mixed}
\end{figure}
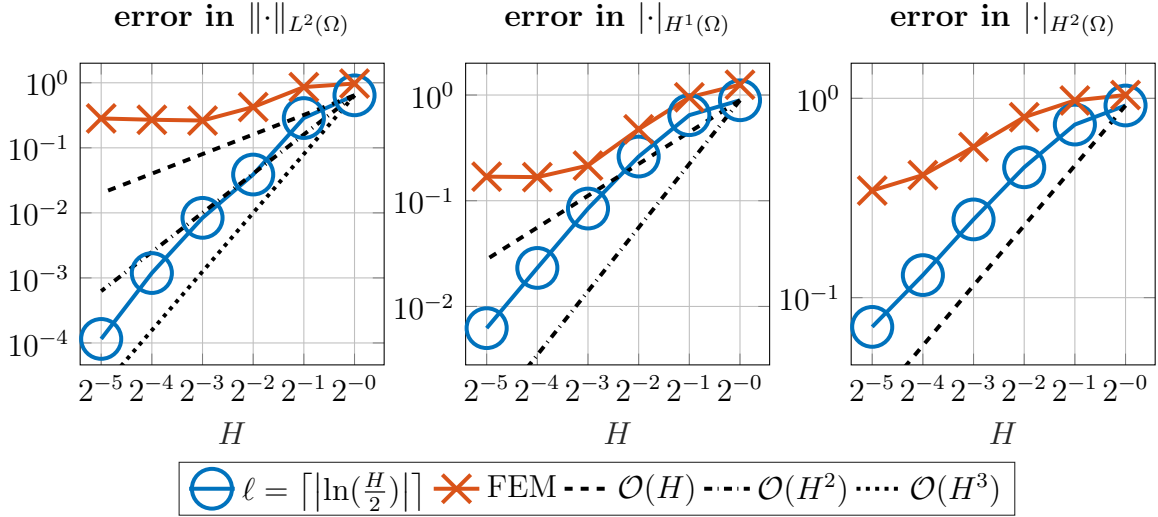

\begin{figure}
%
%
\definecolor{mycolor1}{rgb}{0.00000,0.44700,0.74100}%
\definecolor{mycolor2}{rgb}{0.85000,0.32500,0.09800}%
\begin{tikzpicture}

\begin{axis}[%
width=4cm,
height=4cm,
at={(0cm,0cm)},
scale only axis,
xmode=log,
xmin=0.0234375,
xmax=1.5,
xtick={0.03125,0.0625,0.125,0.25,0.5,1},
xticklabels={{$2^{-5}$},{$2^{-4}$},{$2^{-3}$},{$2^{-2}$},{$2^{-1}$},{$2^{-0}$}},
xminorticks=true,
xlabel style={font=\color{white!15!black}},
xlabel={$H$},
ymode=log,
ymin=1e-04+0.5*(1e-05-1e-04),
ymax=3,
yminorticks=false,
axis background/.style={fill=white},
title style={font=\bfseries},
title={error in $\lVert \cdot \rVert_{L^2(\Omega)}$},
xmajorgrids,
ymajorgrids,
legend style={at={(1.7,-0.5)}, anchor=south, legend columns=5, legend cell align=left, align=left, draw=white!15!black}
]
\addplot [color=mycolor1, line width=1.5pt, mark size=7.5pt, mark=o, mark options={solid, mycolor1}]
  table[row sep=crcr]{%
1	0.574985104935603\\
0.5	0.342179063560794\\
0.25	0.0446276001089037\\
0.125	0.0106433555431832\\
0.0625	0.00216068397881719\\
0.03125	0.000138596778743139\\
};
\addlegendentry{$\ell = \left\lceil \left|\ln(\tfrac{H}{2})\right| \right\rceil$}

\addplot [color=mycolor2, line width=1.5pt, mark size=7.5pt, mark=x, mark options={solid, mycolor2}]
  table[row sep=crcr]{%
1	1.0463093093076\\
0.5	0.920269033372505\\
0.25	0.456387840818945\\
0.125	0.172974087588458\\
0.0625	0.0529969301746803\\
0.03125	0.0141092315775703\\
};
\addlegendentry{FEM}

\addplot [color=black, dashed, line width=1.5pt]
  table[row sep=crcr]{%
1	0.574985104935602\\
0.03125	0.0179682845292376\\
};
\addlegendentry{$\mathcal{O}(H)$}

\addplot [color=black, dashdotted, line width=1.5pt]
  table[row sep=crcr]{%
1	0.574985104935603\\
0.03125	0.000561508891538674\\
};
\addlegendentry{$\mathcal{O}(H^2)$}

\addplot [color=black, dotted, line width=1.5pt]
  table[row sep=crcr]{%
1	0.574985104935602\\
0.03125	1.75471528605836e-05\\
};
\addlegendentry{$\mathcal{O}(H^3)$}

\end{axis}

\begin{axis}[%
width=4cm,
height=4cm,
at={(5.072cm,0cm)},
scale only axis,
xmode=log,
xmin=0.0234375,
xmax=1.5,
xtick={0.03125,0.0625,0.125,0.25,0.5,1},
xticklabels={{$2^{-5}$},{$2^{-4}$},{$2^{-3}$},{$2^{-2}$},{$2^{-1}$},{$2^{-0}$}},
xminorticks=true,
xlabel style={font=\color{white!15!black}},
xlabel={$H$},
ymode=log,
ymin=1e-02+0.8*(1e-03-1e-02),
ymax=2,
yminorticks=false,
axis background/.style={fill=white},
title style={font=\bfseries},
title={error in $\lvert \cdot \rvert_{H^1(\Omega)}$},
xmajorgrids,
ymajorgrids
]
\addplot [color=mycolor1, line width=1.5pt, mark size=7.5pt, mark=o, mark options={solid, mycolor1}, forget plot]
  table[row sep=crcr]{%
1	0.809542219283254\\
0.5	0.708460650927995\\
0.25	0.283272205315574\\
0.125	0.077664782151968\\
0.0625	0.0202355310541372\\
0.03125	0.00527368938170682\\
};
\addplot [color=mycolor2, line width=1.5pt, mark size=7.5pt, mark=x, mark options={solid, mycolor2}, forget plot]
  table[row sep=crcr]{%
1	1.23288451928682\\
0.5	0.911016399342147\\
0.25	0.494032680626576\\
0.125	0.199075971383865\\
0.0625	0.0642903512524902\\
0.03125	0.0190385974882474\\
};
\addplot [color=black, dashed, line width=1.5pt, forget plot]
  table[row sep=crcr]{%
1	0.809542219283254\\
0.03125	0.0252981943526017\\
};
\addplot [color=black, dashdotted, line width=1.5pt, forget plot]
  table[row sep=crcr]{%
1	0.809542219283254\\
0.03125	0.000790568573518802\\
};
\end{axis}

\begin{axis}[%
width=4cm,
height=4cm,
at={(10.145cm,0cm)},
scale only axis,
xmode=log,
xmin=0.0234375,
xmax=1.5,
xtick={0.03125,0.0625,0.125,0.25,0.5,1},
xticklabels={{$2^{-5}$},{$2^{-4}$},{$2^{-3}$},{$2^{-2}$},{$2^{-1}$},{$2^{-0}$}},
xminorticks=true,
xlabel style={font=\color{white!15!black}},
xlabel={$H$},
ymode=log,
ymin=1e-01+0.7*(1e-02-1e-01),
ymax=1.5,
yminorticks=false,
axis background/.style={fill=white},
title style={font=\bfseries},
title={error in $\lvert \cdot \rvert_{H^2(\Omega)}$},
xmajorgrids,
ymajorgrids
]
\addplot [color=mycolor1, line width=1.5pt, mark size=7.5pt, mark=o, mark options={solid, mycolor1}, forget plot]
  table[row sep=crcr]{%
1	0.916839262686384\\
0.5	0.76788642560638\\
0.25	0.467637485081934\\
0.125	0.227606153386078\\
0.0625	0.109489327822641\\
0.03125	0.0540586939427825\\
};
\addplot [color=mycolor2, line width=1.5pt, mark size=7.5pt, mark=x, mark options={solid, mycolor2}, forget plot]
  table[row sep=crcr]{%
1	1.03367433570522\\
0.5	0.974891347802894\\
0.25	0.827608253288689\\
0.125	0.574513643055381\\
0.0625	0.354548193462495\\
0.03125	0.212785734401188\\
};
\addplot [color=black, dashed, line width=1.5pt, forget plot]
  table[row sep=crcr]{%
1	0.916839262686384\\
0.03125	0.0286512269589495\\
};
\end{axis}
\end{tikzpicture}%
	\caption{\small Relative errors for the crack problem with vanishing lower-order terms ($A = A^{(2)}$, $b = 0$, $c = 0$, $f = f^{(2)}$) using the mixed method.}
	\label{fig:exp2_cos_mixed}
\end{figure}

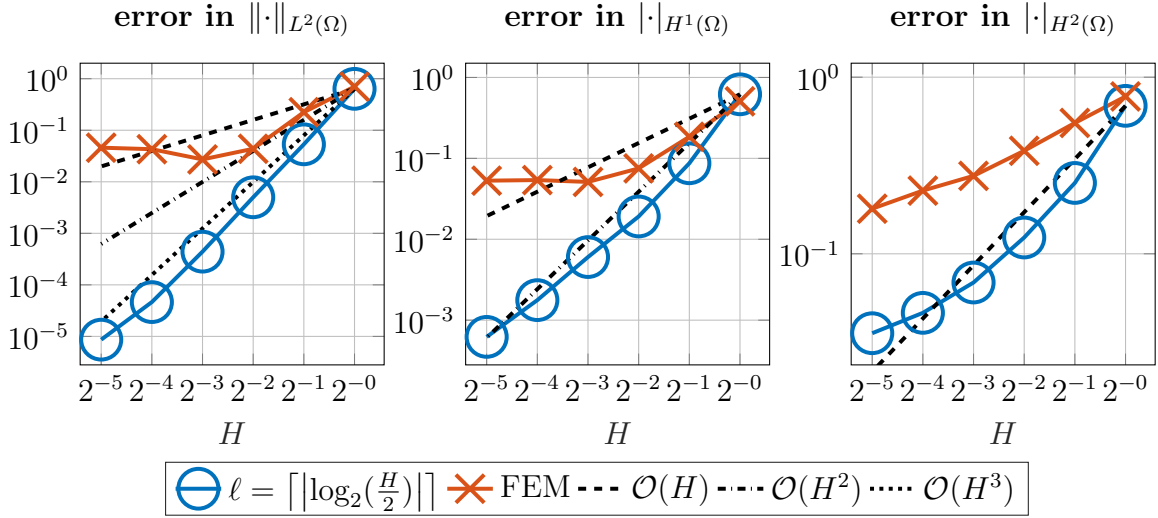
\begin{figure}
%
%
\definecolor{mycolor1}{rgb}{0.00000,0.44700,0.74100}%
\definecolor{mycolor2}{rgb}{0.85000,0.32500,0.09800}%
\begin{tikzpicture}

\begin{axis}[%
width=4cm,
height=4cm,
at={(0cm,0cm)},
scale only axis,
xmode=log,
xmin=0.0234375,
xmax=1.5,
xtick={0.03125,0.0625,0.125,0.25,0.5,1},
xticklabels={{$2^{-5}$},{$2^{-4}$},{$2^{-3}$},{$2^{-2}$},{$2^{-1}$},{$2^{-0}$}},
xminorticks=true,
xlabel style={font=\color{white!15!black}},
xlabel={$H$},
ymode=log,
ymin=1e-05+0.8*(1e-06-1e-05),
ymax=2,
yminorticks=false,
axis background/.style={fill=white},
title style={font=\bfseries},
title={error in $\lVert \cdot \rVert_{L^2(\Omega)}$},
xmajorgrids,
ymajorgrids,
legend style={at={(1.7,-0.5)}, anchor=south, legend columns=5, legend cell align=left, align=left, draw=white!15!black}
]
\addplot [color=mycolor1, line width=1.5pt, mark size=7.5pt, mark=o, mark options={solid, mycolor1}]
  table[row sep=crcr]{%
1	0.635891851596285\\
0.5	0.0533169510113404\\
0.25	0.00500152091333216\\
0.125	0.00044099507943011\\
0.0625	4.63067222202758e-05\\
0.03125	8.64027391420273e-06\\
};
\addlegendentry{$\ell = \left\lceil \left|\log_2(\tfrac{H}{2})\right| \right\rceil$}

\addplot [color=mycolor2, line width=1.5pt, mark size=7.5pt, mark=x, mark options={solid, mycolor2}]
  table[row sep=crcr]{%
1	0.703941406150318\\
0.5	0.224106824855199\\
0.25	0.0437110274348503\\
0.125	0.0271724915366338\\
0.0625	0.042901248291434\\
0.03125	0.0459014489931656\\
};
\addlegendentry{FEM}

\addplot [color=black, dashed, line width=1.5pt]
  table[row sep=crcr]{%
1	0.635891851596285\\
0.03125	0.0198716203623839\\
};
\addlegendentry{$\mathcal{O}(H)$}

\addplot [color=black, dashdotted, line width=1.5pt]
  table[row sep=crcr]{%
1	0.635891851596285\\
0.03125	0.000620988136324497\\
};
\addlegendentry{$\mathcal{O}(H^2)$}

\addplot [color=black, dotted, line width=1.5pt]
  table[row sep=crcr]{%
1	0.635891851596285\\
0.03125	1.94058792601405e-05\\
};
\addlegendentry{$\mathcal{O}(H^3)$}

\end{axis}

\begin{axis}[%
width=4cm,
height=4cm,
at={(5.072cm,0cm)},
scale only axis,
xmode=log,
xmin=0.0234375,
xmax=1.5,
xtick={0.03125,0.0625,0.125,0.25,0.5,1},
xticklabels={{$2^{-5}$},{$2^{-4}$},{$2^{-3}$},{$2^{-2}$},{$2^{-1}$},{$2^{-0}$}},
xminorticks=true,
xlabel style={font=\color{white!15!black}},
xlabel={$H$},
ymode=log,
ymin=1e-03+0.8*(1e-04-1e-03),
ymax=1.5,
yminorticks=false,
axis background/.style={fill=white},
title style={font=\bfseries},
title={error in $\lvert \cdot \rvert_{H^1(\Omega)}$},
xmajorgrids,
ymajorgrids
]
\addplot [color=mycolor1, line width=1.5pt, mark size=7.5pt, mark=o, mark options={solid, mycolor1}, forget plot]
  table[row sep=crcr]{%
1	0.618849076147286\\
0.5	0.0869351106273472\\
0.25	0.0191312552034705\\
0.125	0.00602477119779293\\
0.0625	0.00176806220699643\\
0.03125	0.000621343839093012\\
};
\addplot [color=mycolor2, line width=1.5pt, mark size=7.5pt, mark=x, mark options={solid, mycolor2}, forget plot]
  table[row sep=crcr]{%
1	0.506780833834574\\
0.5	0.184899088117734\\
0.25	0.0751604454173286\\
0.125	0.0511220187805707\\
0.0625	0.053883576565829\\
0.03125	0.0529864877223165\\
};
\addplot [color=black, dashed, line width=1.5pt, forget plot]
  table[row sep=crcr]{%
1	0.618849076147285\\
0.03125	0.0193390336296027\\
};
\addplot [color=black, dashdotted, line width=1.5pt, forget plot]
  table[row sep=crcr]{%
1	0.618849076147286\\
0.03125	0.000604344800925083\\
};
\end{axis}

\begin{axis}[%
width=4cm,
height=4cm,
at={(10.145cm,0cm)},
scale only axis,
xmode=log,
xmin=0.0234375,
xmax=1.5,
xtick={0.03125,0.0625,0.125,0.25,0.5,1},
xticklabels={{$2^{-5}$},{$2^{-4}$},{$2^{-3}$},{$2^{-2}$},{$2^{-1}$},{$2^{-0}$}},
xminorticks=true,
xlabel style={font=\color{white!15!black}},
xlabel={$H$},
ymode=log,
ymin=1e-01+0.85*(1e-02-1e-01),
ymax=1.2,
yminorticks=false,
axis background/.style={fill=white},
title style={font=\bfseries},
title={error in $\lvert \cdot \rvert_{H^2(\Omega)}$},
xmajorgrids,
ymajorgrids
]
\addplot [color=mycolor1, line width=1.5pt, mark size=7.5pt, mark=o, mark options={solid, mycolor1}, forget plot]
  table[row sep=crcr]{%
1	0.688260793610122\\
0.5	0.251877331119953\\
0.25	0.123687165347801\\
0.125	0.06878370405649\\
0.0625	0.046305254909197\\
0.03125	0.0355146505314477\\
};
\addplot [color=mycolor2, line width=1.5pt, mark size=7.5pt, mark=x, mark options={solid, mycolor2}, forget plot]
  table[row sep=crcr]{%
1	0.778032255576848\\
0.5	0.550703484415861\\
0.25	0.383183898990304\\
0.125	0.276259711603375\\
0.0625	0.227943452498199\\
0.03125	0.180083270308305\\
};
\addplot [color=black, dashed, line width=1.5pt, forget plot]
  table[row sep=crcr]{%
1	0.688260793610122\\
0.03125	0.0215081498003163\\
};
\end{axis}
\end{tikzpicture}%
	\caption{\small Relative errors for the combined problem with vanishing lower-order terms ($A=A^{(3)}$, $b=0$, $c=0$, $f = f^{(3)}$) using the mixed method.}
	\label{fig:exp3_cos_mixed}
\end{figure}

\section{Concluding remarks}

\subsection{Review} 
In this work, we presented a novel numerical homogenization scheme for linear second-order elliptic PDEs in nondivergence-form with coefficients that satisfy a (generalized) Cordes condition. 
Motivated by the degrees of freedom of the nonconforming Morley finite element, our approach using the LOD framework provides a proof of concept that this method is also applicable to the class of nondivergence-form PDEs. The error analysis revealed that numerical homogenization is applicable to problems with coefficients that do not exhibit any scale separation, even beyond periodicity. Moreover, the favorable accuracy properties of the classical LOD for divergence-form PDEs are preserved. Various numerical experiments have been performed that support the theoretical findings.

\subsection{Extensions and future work}\label{Sec: ext and fw} 

Finally, we give some remarks regarding extensions of our results and we address future work.

\subsubsection{The case $n\geq 3$} 

In \cref{sec:Numerical homogenization scheme}, we assumed that $n = 2$ for simplicity of the presentation of the methodology. It is straightforward to adapt this to dimensions $n\geq 3$ by defining the quantities of interest corresponding to the degrees of freedom of the Morley element in dimension $n$; see \cite{WX13}. 

\subsubsection{$H^2$-convergence of the numerical homogenization scheme}

The numerical experiments suggest that the numerical homogenization scheme presented in this work converges not only in the $H^1$-norm, but also in the $H^2$-norm. A proof of an $H^2$-error bound is subject of future work.

\subsubsection{Improved localization}

We emphasize that, using an improved localization technique proposed in \cite{HP22}, increasing errors for refinements in $H$ for fixed $\ell$ can be cured. A potential application of the improved localization using the Super-Localized Orthogonal Decomposition \cite{HP21pre,FHP21,BFP22} might also be possible. Moreover, we see the potential to use the proposed scheme as a preconditioner.

\subsubsection{Different problem classes}

The method presented in this paper can be applied to any Lax--Milgram-type problem over $V = H^2(\Omega)\cap H^1_0(\Omega)$ of the form \eqref{intro3} with $F\in V^{\ast}$ and a locally bounded and coercive bilinear form $a:V\times V\rightarrow \R$.

\section*{Acknowledgments}
The work of P. Freese and D. Gallistl is part of projects that have received funding from the European Research Council ERC under the European Union's Horizon 2020 research and innovation program (Grant agreements No.~865751 and 
No.~891734). D. Peterseim acknowledges funding by the Deutsche Forschungsgemeinschaft within the research project \emph{Convexified Variational Formulations at Finite Strains based on Homogenised Damaged Microstructures} (PE 2143/5-1).

\bibliographystyle{plain}
\bibliography{ref_LOD_nondiv}

\end{document}